\numberwithin{equation}{section}
\newtheorem{thm}{Theorem}[section]
\newtheorem{prop}[thm]{Proposition}
\newtheorem{defn}[thm]{Definition}
\theoremstyle{definition}
\newtheorem{rem}[thm]{Remark}
\theoremstyle{remark}
\newcommand{\ds}{\displaystyle}
\newcommand{\R}{\mathbb{R}}
\DeclareMathOperator{\esssup}{ess\,sup}
\patchcmd{\abstract}{\scshape\abstractname}{\textbf{\abstractname}}{}{}
\def\@makefnmark{} %note a di pagina senza numero 2
\title{On a weighted anisotropic eigenvalue problem}
\author[N. Gavitone, R. Sannipoli]{
	Nunzia Gavitone, Rossano Sannipoli}
\address{Dipartimento di Matematica e Applicazioni ``R. Caccioppoli'', Universit\`a degli studi di Napoli Federico II \\ Via Cintia, Complesso Universitario Monte S. Angelo, 80126 Napoli, Italy.}
\email{nunzia.gavitone@unina.it}
\address{Dipartimento di Matematica, Universit\`a di Pisa \\ Largo B. Pontecorvo 5, 56127 Pisa, Italy.}
\email{rossano.sannipoli@dm.unipi.it}
\begin{document}
\maketitle
\begin{abstract}
In this paper we deal with a  weighted eigenvalue problem for the anisotropic $(p,q)$-Laplacian with Dirichlet boundary conditions. We study the main properties of the first eigenvalue and  a reverse H\"older type inequality for the corresponding eigenfunctions. \\ \\
\noindent\textsc{MSC 2020:} 35B51, 35J62. \\
\textsc{Keywords}:  Anisotropic $p$-Laplacian, Comparison results, Reverse H\"older inequality.
\end{abstract}
\section{Introduction}
Let $\Omega\subset \mathbb{R}^n$, $n \ge 2$, be an open, bounded and connected set and let  $p, q$ be such that $1<p$, and  $1<q<p^*$, where $p^*=np/(n-p)$ if $p<n$ and $p^*=\infty$ if $p\ge n$. In this paper we study the following  variational problem 
\begin{equation} \label{varcar}
    \lambda^H_{p,q}(\Omega) = \inf_{\substack{u\in W^{1,p}_0(\Omega), \\ u\neq 0}}\frac{\ds\int_\Omega H(\nabla u)^p\,dx}{\bigg(\ds\int_\Omega m|u|^q\,dx\bigg)^{\frac{p}{q}}},
\end{equation}
where $H$ is a sufficiently smooth norm in $\mathbb{R}^n$ and $m\in L^{\infty}(\Omega)$ is a positive function. Obviously $\lambda^H_{p,q}(\Omega)$ depends also by $m$,  but to simplify the notation we will omit its dependence.  

 The Euler-Lagrange equation associated with the minimization problem \eqref{varcar} is the following 
weighted eigenvalue problem for the anisotropic $(p,q)-$Laplace operator with Dirichlet boundary condition
\begin{equation} \label{APQ}
\begin{cases}
-\mathcal{L}_{p}(u)=\lambda m(x)\| u\|_{q,m}^{p-q}|u|^{q-2}u & \mbox{in}\ \Omega\vspace{0.2cm}\\
u=0&\mbox{on}\ \partial \Omega\vspace{0.2cm},
\end{cases}
\end{equation}
where  $\| u\|_{q,m}=\| u\|_{L^q(\Omega,m)}$ is the weighted Lebesgue norm of $u$ and $\mathcal{L}_p$ is the so-called anisotropic $p$-Laplacian operator  defined as follows
\begin{equation}
    \mathcal{L}_{p}(u) = \mathrm{div} (H(\nabla u)^{p-1}H_{\xi}(\nabla u)).
\end{equation}

We stress that when $p=q$ and $m(x)\equiv 1$, \eqref{varcar} is the first eigenvalue $\lambda^H_p(\Omega)$ of the anisotropic $p$-Laplacian and it has been studied by many authors (see for instance \cite{bfk},\cite{mana} and the references therein). In particular in \cite{bfk} it is proved that $\lambda^{H}_p(\Omega)$ is simple for any $p$, the corresponding eigenfunctions have a sign and that a suitable Faber-Krahn inequality holds.\\
%Actually, also in the general case $p \neq q$, $\lambda^H_{p,q}(\Omega)$ verifies analogous properties.\\ 
When $H=\mathcal E$ is the usual Euclidean norm, $ \mathcal{L}_{p}(u)$ is the well-known $p$-Laplace operator and the eigenvalue problem \eqref{APQ} reduces to the following
\begin{equation} \label{pqe}
\begin{cases}
-\Delta_p u=\lambda \|u\|_{q,m}^{p-q}|u|^{q-2}u & \mbox{in}\ \Omega\vspace{0.2cm}\\
u=0&\mbox{on}\ \partial \Omega.\vspace{0.2cm}
\end{cases}
\end{equation}

%For  $p=q$ it is the classical eigenvalue problem for the $p$-Laplace operator which is widely studied by many authors (see for instance ...).

The spectrum of \eqref{pqe} and the first eigenvalue $\lambda^{\mathcal E}_{p,q}(\Omega)$, when $p\neq q$ and $m\equiv 1$, have been studied for instance in the case $p=2$ in \cite{brasco2019overview}, for any $p$ in \cite{garcia1987existence,otani1984certain, otani1988existence,franzina2010existence} and in \cite{drabek2011quasilinear} where the authors study also the weighted  case.
%Moreover, in \cite{kawohl2007simplicity} the author prove the simplicity for some quasilinear problem.
 It is known that $\lambda^{\mathcal E}_{p,q}(\Omega)$ is not simple, in general, for any $1<q<p^*$. Indeed in  \cite{idogawa1995first} the authors prove the simplicity for any $1<q\le p$,  while for $p<q<p^*$, $\lambda^{\mathcal E}_{p,q}(\Omega)$ could not be necessary simple. This fact has been observed for instance in \cite{brasco2019overview}, in the case $p=2$, and for any $p$ in \cite{nazarov2000one} and \cite{kawohl2007simplicity}, where the authors prove that the simplicity fails if $\Omega$ is a sufficiently thin spherical shell. 
%However there are domains $\Omega$ for which $\lambda^{\mathcal E}_{p,q}(\Omega)$ is simple even if $2<q<2^*$. %Up to our knowledge, the simplicity holds for any $1<q<2^*$ when $\Omega$ is a ball of $\R^n$(see \cite{erbetang}) and for any  convex and bounded set of the plane ( see \cite{lin1994uniqueness}). 

In this paper we study the main properties of $\lambda^{H}_{p,q}(\Omega)$ and of the corresponding eigenfunctions. %As we will see, analogous results hold also in the anisotropic case for
%\eqref{varcar}. \\
In particular our aim is to prove a reverse H\"older inequality for them.% eigenfunctions related to $\lambda_H(\Omega,m,p,q)$.

In the Euclidean case  if $p=q=2$ and $m\equiv 1$,  in \cite{Chiti1982, chiti1982reverse}, Chiti proved the following inequality for the first  eigenfunctions $v$ corresponding to the first Dirichlet eigenvalue of the Laplace operator  $\lambda^{\mathcal E}_2(\Omega)\equiv \lambda(\Omega)$
\begin{equation}\label{Chiti}
    \|v \|_{L^r(\Omega)} \le C(r,s,n,\lambda(\Omega))\| v\|_{L^s(\Omega)}, \quad 0< s<r,
\end{equation}
and  the equality case is achieved if and only if $\Omega$ is a ball. In \cite{alvino1998properties}, the authors prove \eqref{Chiti} for the first eigenfunctions of the $p$-Laplacian. Moreover, in \cite{alberico1999some}, the authors extend the result to the weighted case, and the inequality reads as follows
\begin{equation}\label{Chitiweig}
    \|v \|_{L^r(\Omega,m)} \le C(p,r,s,n,\lambda^{\mathcal E}_{p}(\Omega))\| v\|_{L^s(\Omega,m)}, \quad 0< s<r.
\end{equation}
 In the general case $p \neq q$, in the Euclidean case, a Chiti type inequality is proved in the case $m\equiv 1$ in \cite{carroll2015reverse}  and \cite{carroll2017rate} when $p=2$ and for any $p$ respectively. More precisely in \cite{carroll2017rate} the authors prove the following inequality 
\begin{equation}\label{Chitipq1}
    \|v \|_{L^s(\Omega)} \le C(p,q,s,n,\lambda^{\mathcal E}_{p,q}(\Omega))\| v\|_{L^q(\Omega)}, \quad q<s.
\end{equation}

The goal of this paper is to prove a   Chiti type inequality in the spirit of \eqref{Chitiweig} and \eqref{Chitipq1} for the first eigenfunctions of the general weighted eigenvalue problem \eqref{APQ}. In particular our main theorem is the following
\begin{thm}\label{chitiAPQ}
    Let $\Omega\subset\mathbb R^n$ be an open, bounded and connected set. Let $1< q\le p$, and let $u$ be an  eigenfunction corresponding to the first eigenvalue \eqref{varcar}. Then the following statements hold
    \begin{itemize}
    \item[i)]
    There exists a  constant $C=C(n,p,q,r,\lambda^{H}_{p,q}(\Omega))$ such that
  \begin{equation} \label{q1q2comparison}
       \|u \|_{L^r(\Omega,m)} \le C\,\,\| u\|_{L^q(\Omega,m)}, \quad q\le r;
    \end{equation}
    \item[ii)] There exists a  constant $C=C(n,p,q,r,\lambda^{H}_{p,q}(\Omega))$ such that
    \begin{equation}
    \label{caso_infinito}
    \|u\|_{L^\infty(\Omega)} \le C \|u\|_{L^r(\Omega,m)} \quad 1\le r<\infty.
    \end{equation}
    \end{itemize}
The equality cases hold if and only if $\Omega$ is a Wulff shape.
\end{thm}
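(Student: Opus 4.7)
The plan is to follow the classical Talenti--Chiti symmetrization scheme, suitably adapted to the anisotropic weighted setting via Wulff shapes and the anisotropic perimeter $P_H$. Since $1<q\le p$, we take $u$ positive in $\Omega$ (simplicity and sign of the first eigenfunction in this range go back to \cite{bfk,idogawa1995first} and adapt straightforwardly to the present framework). Set $m_0:=\norm{m}_{L^\infty(\Omega)}$ and let $\kappa_n$ denote the Lebesgue measure of the unit Wulff shape of $H$. Let $\widetilde\Omega$ be a centered Wulff shape chosen so that the first eigenvalue of the anisotropic $(p,q)$-problem on $\widetilde\Omega$ with constant weight $m_0$ equals $\lambda:=\lambda^H_{p,q}(\Omega)$; by scaling such a $\widetilde\Omega$ exists, and by the anisotropic Faber--Krahn inequality together with the bound $m\le m_0$ it is contained in the Wulff shape $\Omega^\star$ of the same Lebesgue measure as $\Omega$. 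Let $v$ be the positive first eigenfunction on $\widetilde\Omega$, which is explicit and anisotropically radial, hence coincides with its own decreasing rearrangement $v^*$.

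The key technical ingredient is a pointwise differential inequality for the decreasing rearrangement $u^*$ on $(0,\abs{\Omega})$. Writing $\mu(t)=\abs{\set{u>t}}$ and integrating \eqref{APQ} on $\set{u>t}$ one obtains
\[
-\frac{d}{dt}\int_{\set{u>t}}H(\nabla u)^p\,dx \;=\; \lambda\,\norm{u}_{L^q(\Omega,m)}^{p-q}\int_{\set{u>t}} m\,u^{q-1}\,dx.
\]
Combining the anisotropic co-area formula, H\"older's inequality in the form $P_H(\set{u>t})^p\le\lto-\tfrac{d}{dt}\int_{\set{u>t}}H(\nabla u)^p\,dx\rto\lto-\mu'(t)\rto^{p-1}$, the anisotropic isoperimetric inequality $P_H(E)\ge n\kappa_n^{1/n}\abs{E}^{(n-1)/n}$, and the bound $m\le m_0$, one derives
\[
n^p\kappa_n^{p/n}\,s^{p(n-1)/n}\lto -u^{*\prime}(s)\rto^{p-1}\;\le\; \lambda\, m_0\,\norm{u}_{L^q(\Omega,m)}^{p-q}\int_0^s u^*(\sigma)^{q-1}\,d\sigma
\]
for a.e.\ $s\in(0,\abs{\Omega})$; the same computation applied to $v$ on $\widetilde\Omega$ produces the analogous expression with equality.

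The core of the argument is now a Chiti-type comparison. After the normalization $\norm{u}_{L^q(\Omega,m)}=\norm{v}_{L^q(\widetilde\Omega,m_0)}$, one proves $u^*(s)\le v^*(s)$ for a.e.\ $s\in(0,\abs{\widetilde\Omega})$ and $u^*\equiv 0$ on $(\abs{\widetilde\Omega},\abs{\Omega})$. The proof is by contradiction: assume the set where $u^*>v^*$ is nonempty; using the ODE inequality for $u^*$ and the ODE equality for $v^*$ on the maximal such interval, together with the equal weighted $L^q$ norms, one reaches a contradiction. Integrating the pointwise bound yields, for $r\ge q$,
\[
\norm{u}_{L^r(\Omega,m)}^r\;\le\; m_0\int_0^{\abs{\widetilde\Omega}}v^*(s)^r\,ds \;=\; \norm{v}_{L^r(\widetilde\Omega,m_0)}^r,
\]
which is \eqref{q1q2comparison}. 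Sending $r\to\infty$ gives $\norm{u}_{L^\infty(\Omega)}\le v(0)$; combined with H\"older's inequality on the finite measure space $(\Omega,m\,dx)$ when $r\ge q$, and with the interpolation $\norm{u}_{L^q(\Omega,m)}\le \norm{u}_{L^\infty(\Omega)}^{1-r/q}\norm{u}_{L^r(\Omega,m)}^{r/q}$ when $1\le r<q$ (absorbing the $L^\infty$ factor), this proves \eqref{caso_infinito}.

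The main obstacle is the Chiti comparison of the third paragraph: the non-constancy of $m$ precludes a direct pointwise ODE comparison, so one must work with the uniform bound $m\le m_0$ while keeping the $L^q$ normalization consistent; this is the step that distinguishes the argument from \cite{alberico1999some} (which handles $p=q$) and \cite{carroll2017rate} (which handles $m\equiv 1$). The equality cases follow by tracing the rigidity of the anisotropic isoperimetric inequality, whose equality case is a Wulff shape, and of the H\"older inequalities used in the co-area step: equality forces every super-level set of $u$ to be a homothetic Wulff shape, and $m=m_0$ on these, so $\Omega$ itself must be a Wulff shape.
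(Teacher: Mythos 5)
Your overall scheme (Talenti-type differential inequality for $u^*$, comparison with a radial eigenfunction on a Wulff shape having the same first eigenvalue, then integration) is the right one, but the step you call ``the core of the argument'' asserts something false. You claim that, after the $L^q$ normalization, $u^*(s)\le v^*(s)$ a.e.\ on $(0,|\widetilde\Omega|)$ and $u^*\equiv 0$ on $(|\widetilde\Omega|,|\Omega|)$. Since $u$ is a first eigenfunction on the connected set $\Omega$, the strong maximum principle gives $u>0$ in $\Omega$, hence $u^*>0$ on $[0,|\Omega|)$; in particular $u^*$ cannot vanish on $(|\widetilde\Omega|,|\Omega|)$ when $|\widetilde\Omega|<|\Omega|$, and near $s=|\widetilde\Omega|$ one has $v^*(s)\to 0<u^*(s)$, so the claimed pointwise bound fails there. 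What is actually true (and what the paper proves) is a \emph{single-crossing} statement: there is exactly one $s_0$ with $u^*\le z^*$ on $[0,s_0]$ and $u^*\ge z^*$ on $[s_0,|\Omega^\star_\lambda|]$; this is established by building a hybrid profile $w^*$ out of $u^*$ and $z^*$, checking that its Rayleigh quotient does not exceed $\lambda^H_{p,q}(\Omega)$, and invoking the simplicity of the first eigenvalue on the Wulff shape. The single crossing only yields the integral comparison $\int_0^s m^*(u^*)^q\,dt\le\int_0^s m^*(z^*)^q\,dt$, which must then be upgraded to exponents $r\ge q$ by a dominated-rearrangement lemma (Proposition \ref{domi}). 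Your contradiction argument ``on the maximal interval where $u^*>v^*$'' is aimed at the wrong conclusion and is not supplied, so the whole of part i) is unsupported.

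Two further points. First, replacing $m^*$ by $m_0=\|m\|_{L^\infty}$ and comparing with the constant-weight problem on $\widetilde\Omega$ is not innocuous: the dominated-rearrangement step needs the \emph{same} weight $h=m^*$ on both sides of the comparison, which is precisely why the paper symmetrizes to the problem on $\Omega^\star_\lambda$ with weight $m^\star$ rather than $m_0$; with $m_0$ on one side and $m^*$ on the other, the passage from the $L^q$ comparison to the $L^r$ one breaks down (and the resulting constant would in any case pick up a dependence on $m$ through $m_0$). Second, your proof of part ii) also rests on the false pointwise bound (``sending $r\to\infty$''); the paper obtains ii) from a \emph{separate} comparison under the $L^\infty$ normalization $\|u\|_{L^\infty(\Omega)}=\|z\|_{L^\infty(\Omega^\star_\lambda)}$, which gives the opposite pointwise inequality $u^*\ge z^*$ on $[0,|\Omega^\star_\lambda|]$ and hence $\|u\|_{L^r(\Omega,m)}/\|u\|_{L^\infty(\Omega)}\ge\|z\|_{L^r(\Omega^\star_\lambda,m^\star)}/\|z\|_{L^\infty(\Omega^\star_\lambda)}$. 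Your interpolation trick for $1\le r<q$ is fine once the $r=q$ case of ii) is available, but that case itself needs the $L^\infty$-normalized comparison, not the $L^q$-normalized one.
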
%Moreover in \cite{kawohl2007simplicity} it is 
We stress that this result gives, in particular, a Chiti type inequality for the eigenfunctions corresponding to the first weighted eigenvalue of the anisotropic $p$-Laplacian and extend \eqref{Chitipq1} to the weighted case. The proof is based on symmetrization techniques and a comparison between the eigenfunctions corresponding to the first eigenvalue \eqref{varcar} and the first eigenfunctions of a suitable symmetrical eigenvalue problem. 
\\ 

The structure of the paper is the following.
In Section 2 we fix some notation, we  recall some basic properties of the Finsler norms and we give a brief overview about convex symmetrization. In Section 3 we study the main properties of $\lambda^H_{p,q}(\Omega)$  and  a Faber-Krahn type inequality. In the last Section we prove Theorem \ref{chitiAPQ} by using symmetrization arguments.

\section{Notations and preliminaries}
Throughout this article,  $|\cdot|$ denotes  the Euclidean norm in $\mathbb{R}^n$, while $\cdot$ is  the standard Euclidean scalar product for  $n\geq2$. Moreover we denote by $|\Omega|$ the Lebesgue measure  of $\Omega\subseteq \mathbb{R}^n$, $B_R$ the Euclidean ball centered at the origin with radius $R$ and by $\omega_n$ the measure of the unit ball.

\noindent Let $E\subseteq\mathbb{R}^n$ be a bounded, open set and let $\Omega\subseteq\R^{n}$ be a measurable set.  We recall now the definition of the perimeter of $\Omega$ in $E$ in the sense of De Giorgi, that is
\begin{equation*}\label{def_per_fusco}
P(\Omega; E)=\sup\left\{  \int_\Omega {\rm div} \varphi\:dx :\;\varphi\in C^{\infty}_c(E;\mathbb{R}^n),\;||\varphi||_{\infty}\leq 1 \right\}.
\end{equation*}
The perimeter of $\Omega$ in $\mathbb{R}^n$ will be denoted by $P(\Omega)$ and, if $P(\Omega)<\infty$, we say that $E$ is a set of finite perimeter. Some  references for  results relative to the sets of finite perimeter are for example \cite{maggi2012sets}.  Moreover, if $\Omega$ has Lipschitz boundary, we have that
\begin{equation*}
P(\Omega)=\mathcal{H}^{n-1}(\partial \Omega). 
\end{equation*}

%\begin{thm}[Coarea formula]
%	Let $f:\Omega\to\R$ be a Lipschitz function, and $u:\Omega\to\R$ a measurable function. Then
%	\begin{equation}
%	\label{coarea}
%	{\displaystyle \int _{\Omega}u|\nabla f(x)|dx=\int _{\mathbb {R} }dt\int_{(\Omega\cap f^{-1}(t))}u(y)\, d\mathcal {H}^{n-1}(y)}
%	\end{equation}
%\end{thm}
%Throughout this paper we will denote by $B_R= \{x\in \mathbb{R}^n : \|x\| < R\}$ the ball centered at the origin with radius $R>0$, where $\|\cdot \|$ is the classical euclidean distance; by $\Omega$ a bounded $C^{2,\alpha}$ and simply connected domain with finite Lebeasgue measure, where $C^{2,\alpha}$ stands for the $\alpha$-H\"olderian space. We denote by $\mathcal{H}^{n-1}$ the $(n-1)$-dimensional Hausdorff measure in $\mathbb{R}^n$ and by $|\cdot|$ the Lebeasgue measure in $\mathbb{R}^n$.\\
\subsection{The anisotropic norm}
Let $H:\mathbb{R}^n \longrightarrow [0,+\infty]$, $n\ge 2$, be a $C^2(\mathbb{R}^n \setminus \{0\})$ convex function which is $1$-homogeneous, i.e.
\begin{equation} \label{HP}
H(t\xi)= |t| H(\xi) \,\,\,\,\,\, \forall \xi \in \mathbb{R}^n \, , \,\, \forall t \in \mathbb{R},
\end{equation}
and such that
\begin{equation} \label{control}
\gamma |\xi | \le H(\xi) \le \delta |\xi|,
\end{equation}
for some positive constants $\gamma \le \delta$. \\
These properties guarantee that $H$ is a norm in $\mathbb{R}^n$. Indeed by \eqref{control} we have that $H(\xi)=0$ if and only if $\xi = 0$. It is homogeneous by \eqref{HP} and the triangular inequality is a consequence of the convexity of the function $H$: if $\xi,\eta \in \mathbb{R}^n$, then
\begin{equation*}
	\frac{H(x+y)}{2}= H\bigg(\frac{x}{2}+\frac{y}{2}\bigg)\le \frac{H(x)}{2}+\frac{H(y)}{2}.
\end{equation*}
Because of \eqref{HP}, we can assume that the set
\begin{equation*}
K = \{\xi \in \mathbb{R}^n : H(\xi)\le 1 \}
\end{equation*}
is such that $|K|=\omega_n$, where $\omega_n$ is the measure of the unit sphere in $\mathbb{R}^n$. We can define the support function of $K$ as
\begin{equation}
H^{\circ}(x)= \sup_{\xi \in K} \left< x , \xi \right>,
\end{equation} 
where $\left< \cdot , \cdot \right>$ denotes the scalar product in $\mathbb{R}^n$. $H^{\circ}:\mathbb{R}^n \longrightarrow [0,+\infty]$ is a convex, homogeneous function in the sense of \eqref{HP}. Moreover $H$ and $H^{\circ}$ are polar to each other, in the sense that
\begin{equation*}
H^{\circ}(x) = \sup_{\xi \neq 0 } \frac{\left< x , \xi \right>}{H(\xi)}
\end{equation*}
and
\begin{equation*}
H(x) = \sup_{\xi \neq 0 } \frac{\left< x , \xi \right>}{H^{\circ}(\xi)}.
\end{equation*}
$H$ is the support function of the set
\begin{equation*}
K^{\circ} = \{x \in \mathbb{R}^n : H^{\circ}(x)\le 1 \}.
\end{equation*}
The set $\mathcal{W}=\{x \in \mathbb{R}^n : H^{\circ}(x)< 1 \}$ is the so-called Wulff shape centered at the origin. We set $k_n = |\mathcal{W}|$. More generally we will denote by $\mathcal{W}_R(x_0)$ the Wulff shape centered in $x_0 \in \mathbb{R}^n$ the set $R\mathcal{W}+x_0$, and $\mathcal{W}_R(0)=\mathcal{W}_R$.\\
The following properties hold for $H$ and $H^{\circ}$:
\begin{equation} \label{prodsc}
H_{\xi}(\xi)\cdot \xi = H(\xi), \;\;\;\; H^{\circ}_{\xi}(\xi)\cdot \xi = H^{\circ}(\xi),
\end{equation} 
\begin{equation} \label{HHcirc}
H(H^{\circ}_{\xi}(\xi))= H^{\circ}(H_{\xi}(\xi))=1 \,\,\,\,\, \forall \xi \in \mathbb{R}^n \setminus \{0\},
\end{equation} 
\begin{equation}\label{Hcircxi}
H^{\circ}(\xi)H_{\xi}(H^{\circ}_{\xi}(\xi))=H(\xi)H^{\circ}_{\xi}(H_{\xi}(\xi)) = \xi \,\,\, \forall \xi \in \mathbb{R}^n \setminus \{0\}.
\end{equation}
If $\Omega \subset \mathbb{R}^n$ is an open bounded set with Lipschitz boundary and $E$ is an open subset of $\mathbb{R}^n$, we can give a generalized definition of perimeter of $E$ with respect to the anisotropic norm as follows (see for instance \cite{Amar1994anotion})
\begin{equation*}
	P_H(E,\Omega)= \int_{\partial^* E \cap \Omega} H(\nu )\,d\mathcal{H}^{n-1},
\end{equation*}
where $\partial^*E$ is the reduced boundary of $E$  (for the definition see \cite{evans2015measure}), $\nu$ is its Euclidean outer normal and $\mathcal{H}^{n-1}$ is the $(n-1)-$dimensional Hausdorff measure in $\mathbb{R}^n$. Clearly, if $E$ is open, bounded and Lipschitz, then the outer unit normal exists almost everywhere and
\begin{equation}
P_H(E,\mathbb{R}^n):=P_H (E) = \int_{\partial E} H(\nu )\,d\mathcal{H}^{n-1}.
\end{equation}
 %We will denote $P_H(E)=P_H(E,\mathbb{R}^n)$. 
By \eqref{control} we have that
\begin{equation*}
\gamma P(E) \le P_H(E)\le \delta P(E).
\end{equation*}
In \cite{alvino1990comparison} it is shown that if $u\in W^{1,1}(\Omega)$, then for a.e. $t>0$ 
\begin{equation} \label{AFLTAB}
	-\frac{d}{dt}\int_{\{u>t\}} H(\nabla u)\,dx = P_H(\{u>t\},\Omega) = \int_{\partial^*\{u>t\}\cap \Omega} \frac{H(\nabla u)}{|\nabla u|}\,d\mathcal{H}^{n-1}.
\end{equation}
Moreover an isoperimetric inequality for the anisotropic perimeter holds (for instance see \cite{busemann1949isoperimetric, dacorogna1992wulff, trombetti1997convex, fonseca1991uniqueness})
\begin{equation} \label{IIA}
P_H( E ) \ge n k^{\frac{1}{n}}_n |E|^{1-\frac{1}{n}}.
\end{equation}

%Eventually we give the definition of anisotropic distance (see \cite{crasta2007distance}). Let $\Omega \subset \mathbb{R}^n$ be an open, bounded and with Lipschitz boundary. We will call anisotropic distance and denote it by $d_H$ the following
%\begin{equation} \label{anisdist}
%    d_H(x) = \inf_{y\in \partial \Omega} H^{\circ}(x-y).
%\end{equation}
%By property \eqref{HHcirc} it is straightforward to check that
%\begin{equation*}
%    H(\nabla d_H(x))=1.
%\end{equation*}
%Let $t>0$, we will denote by 
%\begin{equation*}
 %   \Omega_t = \{x\in\Omega : d_H(x)>t\}
%\end{equation*}

\subsection{Convex symmetrization}
Let $\Omega \subset \R^n$ be an open, bounded, connected set.
Let $f: \Omega \longrightarrow [0,+\infty]$ be a measurable function. The decreasing rearrangement $f^*$ of $f$ is defined as follows
\begin{equation*}
f^*(s)= \inf \{t\ge 0 :  \mu(t)<s\} \,\,\,\,\,\, s\in [0,|\Omega|],
\end{equation*}
where 
\begin{equation*}
    \mu(t) = |\{x\in \Omega: |f(x)|>t\}|,
\end{equation*}
is the distribution function of $f$. We recall that the Schwarz symmetrand of $f$ is a radially spherically function  defined as follows
\[
f^{\sharp}(x) = f^*(\omega_n |x|^n) \,\,\,\,\,\,\,\, x\in \Omega^{\sharp}.
\]
where $\Omega^{\sharp}$ is the ball centered at the origin such that $|\Omega^{\sharp}|=|\Omega|$.
The convex symmetrization $f^{\star}$ of $f$ instead, is a  function symmetric with respect to $H^{\circ}$ defined as follows
\begin{equation*}
f^{\star}(x) = f^*(k_n (H^{\circ}(x))^n) \,\,\,\,\,\,\,\, x\in \Omega^{\star},
\end{equation*}
where $\Omega^\star$ is a Wulff shape centered at the origin and such that $|\Omega^{*}|=|\Omega|$ (see \cite{trombetti1997convex}).
We stress that both $f^{\star}$ and $f^{\sharp}$ are defined by means the decreasing rearrangement $f^*$ but they have different symmetry. 
In particular it is well known that the functions $f$, $f^*$, $f^{\sharp}$  and $f^{\star}$ are equimeasurable, i.e.
\begin{equation*}
|\{f>t\}| =|\{f^{\sharp} > t\}| = |\{f^* > t\}| = |\{f^{\star}>t\}| \,\,\,\,\, t\ge 0.
\end{equation*}
As a consequence, if $f\in L^p(\Omega)$, $p\ge 1$, then 
\begin{equation}\label{conservationnorms}
\| f\|_{L^p(\Omega)}\| =\| f^{\sharp} \|_{L^p(\Omega^{\sharp})} = \| f^* \|_{L^p([0,|\Omega|])} = \|f^{\star}\|_{L^p(\Omega^{\star})}.
\end{equation}
Regarding the norm of the gradient, a generalized version of the  well known P\'olya-Szeg\"o inequality holds and it states (see for instance \cite{trombetti1997convex})
\begin{thm}{(P\'olya-Szeg\"o principle)}\label{polyaszego}
If $w \in W^{1,p}_0(\Omega)$ for $p\ge 1$, then we have that
\begin{equation*}
    \int_{\Omega}H(\nabla u)^p\,dx \ge \int_{\Omega^{\star}} H(\nabla u^{\star})^p\,dx.
\end{equation*}
where $\Omega^\star$ is the Wulff Shape such that $|\Omega^\star|=|\Omega|$.
\end{thm}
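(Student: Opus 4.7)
The plan is to carry out a Chiti-type symmetrization argument adapted to the anisotropic weighted $(p,q)$ setting. The scheme is: (1) derive a nonlinear integro-differential inequality for the decreasing rearrangement $u^*$ of the first eigenfunction; (2) compare it with the analogous equality satisfied by the Wulff-radial first eigenfunction $v$ of a symmetric problem on $\Omega^\star$; (3) deduce the pointwise bound $u^*(s)\le v^*(s)$, from which both \eqref{q1q2comparison} and \eqref{caso_infinito} follow.

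Since the first eigenfunction has constant sign (as proved in Section~3, along the lines of \cite{bfk}), I may assume $u\ge 0$. Testing \eqref{APQ} with $\varphi_h=\min\{(u-t)_+/h,\,1\}\in W^{1,p}_0(\Omega)$ and letting $h\to 0^+$ gives, for a.e.\ $t\in(0,\|u\|_{L^\infty(\Omega)})$,
\begin{equation*}
-\frac{d}{dt}\int_{\{u>t\}} H(\nabla u)^p\,dx = \lambda^H_{p,q}(\Omega)\,\|u\|_{q,m}^{p-q}\int_{\{u>t\}} m\,u^{q-1}\,dx.
\end{equation*}
H\"older's inequality on the level set $\{u=t\}$ (with exponents $p$ and $p'$ applied to $H(\nabla u)|\nabla u|^{-1/p}$ and $|\nabla u|^{-(p-1)/p}$), combined with \eqref{AFLTAB}, gives
\begin{equation*}
P_H(\{u>t\})^p \le \left(-\frac{d}{dt}\int_{\{u>t\}} H(\nabla u)^p\,dx\right)(-\mu'(t))^{p-1}.
\end{equation*}
Using the anisotropic isoperimetric inequality \eqref{IIA} and the Hardy--Littlewood bound $\int_{\{u>t\}} mu^{q-1}\,dx\le \int_0^{\mu(t)} m^*(\sigma)\,u^*(\sigma)^{q-1}\,d\sigma$, and then changing variables $s=\mu(t)$ so that $u^*(s)=t$, I arrive at the integro-differential inequality
\begin{equation*}
\bigl(-(u^*)'(s)\bigr)^{p-1}(nk_n^{1/n})^p\,s^{p(n-1)/n} \le \lambda^H_{p,q}(\Omega)\,\|u\|_{q,m}^{p-q}\int_0^s m^*(\sigma)\,u^*(\sigma)^{q-1}\,d\sigma.
\end{equation*}

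I then consider the symmetric eigenvalue problem on $\Omega^\star$ with weight $m^\star$. By Theorem~\ref{polyaszego} together with Hardy--Littlewood applied to $\int_\Omega m\,|u|^q\,dx$, its first eigenvalue is no larger than $\lambda^H_{p,q}(\Omega)$, and its positive Wulff-radial first eigenfunction $v$ saturates every step above, so that $v^*$ satisfies the equality version of the last displayed relation. Normalizing $v$ by $\|v\|_{L^q(\Omega^\star,m^\star)}=\|u\|_{q,m}$, the main step is a Chiti-type comparison: if the set $\{s:u^*(s)>v^*(s)\}$ were nonempty, a first-crossing-point argument combining the strict inequality for $u^*$ with the equality for $v^*$ and the $L^q$ normalization would yield a contradiction. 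The hypothesis $q\le p$ is essential here, as it guarantees that the coupling term $\int_0^s m^*(\sigma)\,w(\sigma)^{q-1}\,d\sigma$ depends monotonically on $w$ in a way compatible with the differential comparison. This is the principal technical obstacle; its outcome is the pointwise bound $u^*(s)\le v^*(s)$ for every $s\in[0,|\Omega|]$.

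The conclusions now follow readily. For \eqref{caso_infinito} with $r=q$, one has $\|u\|_{L^\infty(\Omega)}=u^*(0)\le v^*(0)$, and $v^*(0)$ is controlled by $\|v\|_{L^q(\Omega^\star,m^\star)}=\|u\|_{q,m}$ via the explicit Wulff-radial ODE solved by $v$, whose coefficients depend only on $n,p,q$ and $\lambda^H_{p,q}(\Omega)$. The same scheme handles $r>q$, while the range $1\le r<q$ is recovered by the interpolation $\|u\|_{q,m}\le \|u\|_{L^\infty(\Omega)}^{1-r/q}\|u\|_{L^r(\Omega,m)}^{r/q}$ combined with the case $r=q$. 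Inequality \eqref{q1q2comparison} then follows from $\|u\|_{L^r(\Omega,m)}^r\le \|u\|_{L^\infty(\Omega)}^{r-q}\|u\|_{q,m}^q$ and \eqref{caso_infinito} at $r=q$. The equality case is obtained by tracking the equality cases in P\'olya--Szeg\"o, the anisotropic isoperimetric inequality, and Hardy--Littlewood, which force $\Omega$ to be a Wulff shape, $u$ to be Wulff-radial, and $u^*=v^*$.
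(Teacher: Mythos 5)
Your proposal does not address the statement you were asked to prove. The statement is the anisotropic P\'olya--Szeg\"o principle (Theorem \ref{polyaszego}): for $w\in W^{1,p}_0(\Omega)$, the Dirichlet-type integral $\int_\Omega H(\nabla w)^p\,dx$ does not increase under convex symmetrization. What you have written instead is an outline of the proof of the main Chiti-type result (Theorem \ref{chitiAPQ}): you derive the Talenti-type integro-differential inequality for $u^*$, compare with a Wulff-radial eigenfunction, and deduce \eqref{q1q2comparison} and \eqref{caso_infinito}. Worse, in the middle of your argument you explicitly invoke ``Theorem~\ref{polyaszego}'' to compare the first eigenvalues of $\Omega$ and $\Omega^\star$, so the very inequality to be established is assumed as a known tool; as an attempted proof of Theorem \ref{polyaszego} the argument is circular, and none of its steps (level-set testing of the eigenvalue equation, the crossing-point comparison, the normalization of $v$) bears on the symmetrization inequality itself, which concerns an arbitrary $W^{1,p}_0$ function and has nothing to do with eigenfunctions.

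For the record, the paper does not prove this theorem either: it is recalled as a known result from \cite{trombetti1997convex}. A correct self-contained proof runs along the following standard lines, which share only superficial ingredients with your sketch: by the coarea-type identity \eqref{AFLTAB} and H\"older's inequality on the level sets one gets, for a.e.\ $t$,
\begin{equation*}
-\frac{d}{dt}\int_{\{|w|>t\}}H(\nabla w)^p\,dx \;\ge\; \frac{P_H(\{|w|>t\})^p}{(-\mu'(t))^{p-1}} \;\ge\; \frac{\bigl(n k_n^{1/n}\bigr)^p\,\mu(t)^{\frac{p(n-1)}{n}}}{(-\mu'(t))^{p-1}},
\end{equation*}
where the second step is the anisotropic isoperimetric inequality \eqref{IIA}; integrating in $t$ and observing that for $w^\star$ every inequality above is an equality (its level sets are Wulff shapes, on which $H(\nabla w^\star)$ is constant, and $w$, $w^\star$ have the same distribution function $\mu$) yields $\int_\Omega H(\nabla w)^p\,dx\ge\int_{\Omega^\star}H(\nabla w^\star)^p\,dx$. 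If you want to keep your text, it should be offered as a proof of Theorem \ref{chitiAPQ}, not of Theorem \ref{polyaszego}, and even there the crucial crossing-point step is only asserted, not carried out.
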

The equality case, proved in the euclidean case by \cite{ziemer1988minimal}, was studied in \cite{ferone2004convex}. We will state it for sake of completeness
\begin{thm}\label{BZanisotropic}
    Let $u$ be a non-negative function in $W^{1,p}(\mathbb{R}^n)$, for $1<p<+\infty$, such that
    \begin{equation*}
        |\{|\nabla u^\star|=0\}\cap \{0<u^\star<\esssup u\}|=0.
    \end{equation*}
    Then
    \begin{equation*}
        \int_{\mathbb{R}^n}H(\nabla u)^p\,dx = \int_{\mathbb{R}^n} H(\nabla u^{\star})^p\,dx
    \end{equation*}
    if and only if $u=u^\star$ a.e in $\mathbb{R}^n$, up to translations.
\end{thm}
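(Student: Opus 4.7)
The plan is to prove the nontrivial direction (``only if'') by tracking the equality cases through the chain of inequalities that yields the anisotropic P\'olya-Szeg\"o principle (Theorem \ref{polyaszego}); the ``if'' direction is immediate from the translation invariance of $\int_{\mathbb R^n} H(\nabla u)^p\,dx$ together with the definition of $u^\star$.

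First I would recall that the P\'olya-Szeg\"o inequality itself is obtained as the composition of three ingredients: the coarea formula, which gives $\int_{\mathbb R^n} H(\nabla u)^p\,dx = \int_0^\infty \int_{\{u=t\}} H(\nabla u)^p/|\nabla u|\,d\mathcal H^{n-1}\,dt$; a H\"older estimate on each level set yielding $P_H(\{u>t\})^p \le (-\mu'(t))^{p-1}\int_{\{u=t\}} H(\nabla u)^p/|\nabla u|\,d\mathcal H^{n-1}$; and the anisotropic isoperimetric inequality \eqref{IIA}. On $u^\star$ all three become equalities because the super-level sets are Wulff shapes and $H(\nabla u^\star)$ is constant on each of them. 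Therefore the assumed equality of the integrals propagates backwards: for a.e.\ $t\in(0,\esssup u)$ one obtains both equality in the level-set H\"older step (so $H(\nabla u)$ must be constant $\mathcal H^{n-1}$-a.e.\ on $\{u=t\}$) and equality in the anisotropic isoperimetric inequality applied to $\{u>t\}$.

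By the rigidity of the latter (see for instance \cite{fonseca1991uniqueness}), for a.e.\ such $t$ the super-level set $\{u>t\}$ coincides, up to a Lebesgue-negligible set, with a Wulff shape $\mathcal W_{r(t)}(x(t))$, where $r(t)=(\mu(t)/k_n)^{1/n}$ is fixed by equimeasurability but $x(t)$ is a priori free. The decisive remaining step is to show that $x(t)$ may be chosen independent of $t$, up to a single global translation. This is precisely where the non-degeneracy assumption $|\{|\nabla u^\star|=0\}\cap\{0<u^\star<\esssup u\}|=0$ enters: it forbids plateaus of $u^\star$, and hence of $\mu$, so that $t\mapsto r(t)$ is strictly decreasing on the essential range of $u$. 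Combining this strict monotonicity with the nesting $\mathcal W_{r(t_2)}(x(t_2))\subseteq\mathcal W_{r(t_1)}(x(t_1))$ for $t_1<t_2$, and with a continuity-in-$t$ argument on the measure of the symmetric differences between $\{u>t\}$ and $\mathcal W_{r(t)}(x_0)$, one can pin $x(t)$ to a constant $x_0$; translating by $-x_0$ then identifies the super-level sets of $u$ with those of $u^\star$, giving $u=u^\star$ a.e.

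The main obstacle is this last step, since two nested Wulff shapes of strictly different radii can a priori have distinct centers, so concentricity does not follow from isoperimetric rigidity alone and one is forced to invoke the non-plateau hypothesis to rule out ``drift'' of $x(t)$. This is the anisotropic counterpart of the delicate point in the Euclidean rigidity result of \cite{ziemer1988minimal}, extended in \cite{ferone2004convex} to the Finsler setting of the present paper; without the non-plateau assumption one can indeed construct functions that are rigid ``shifts'' of $u^\star$ on a level, producing equality of the two integrals without $u$ agreeing with $u^\star$, so the hypothesis is essentially sharp.
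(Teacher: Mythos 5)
Note first that the paper itself offers no proof of Theorem \ref{BZanisotropic}: it is stated as a quoted result, with the Euclidean case attributed to Brothers--Ziemer \cite{ziemer1988minimal} and the anisotropic case to \cite{ferone2004convex}. So your attempt has to stand on its own, and as it stands it has a genuine gap at exactly the point you yourself call ``the decisive remaining step''. Your claim is that once one knows, for a.e.\ $t$, that $\{u>t\}$ coincides with a Wulff shape $\mathcal{W}_{r(t)}(x(t))$, the strict monotonicity of $r(t)$ (coming from the hypothesis on $\nabla u^\star$), the nesting $\mathcal{W}_{r(t_2)}(x(t_2))\subseteq\mathcal{W}_{r(t_1)}(x(t_1))$ for $t_1<t_2$, and a continuity-in-$t$ argument pin the centers $x(t)$ to a single point. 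This is false as a matter of geometry: nesting only requires $H^{\circ}(x(t_2)-x(t_1))\le r(t_1)-r(t_2)$, so for instance $x(t)=\frac12\,(r(0)-r(t))\,e_1$ gives a nested, continuously varying family of Wulff shapes with strictly decreasing radii and non-constant centers. Hence no soft argument from monotonicity, nesting and continuity can rule out drift; concentricity has to be extracted from the piece of information your sketch obtains but never uses, namely equality in the level-set H\"older step, which forces $H(\nabla u)$ to be constant $\mathcal{H}^{n-1}$-a.e.\ on $\{u=t\}$ and equal to the corresponding value for $u^\star$ (heuristically, a drift $\dot x(t)\neq 0$ makes the normal speed of the level surfaces, hence $|\nabla u|$, non-constant on $\{u=t\}$). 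Making this rigorous at mere $W^{1,p}$ regularity is precisely the delicate core of \cite{ziemer1988minimal} and \cite{ferone2004convex}, and it is absent from your proposal.

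Two further points, more minor. The ``backward propagation'' of equality from the total integrals to a.e.\ level $t$ is not automatic either: one needs the whole chain (coarea, H\"older, the anisotropic isoperimetric inequality \eqref{IIA}) for $u$ to be bounded below termwise by the exact expression for $u^\star$, and this uses $-\mu'(t)\ge\int_{\{u=t\}}|\nabla u|^{-1}\,d\mathcal{H}^{n-1}$ (whose defect lives where $\nabla u$ vanishes on level sets) together with the absolute continuity of $\mu$, which is itself where the hypothesis $|\{|\nabla u^\star|=0\}\cap\{0<u^\star<\esssup u\}|=0$ really enters -- it is more than a ``no plateaus'' condition. Your closing remark that the hypothesis is essentially sharp (gluing a translated upper portion above a plateau) is correct, but the proof as sketched does not establish the theorem.
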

Clearly Theorem \ref{BZanisotropic} can be adapted in the case of a $W^{1,p}_0(\Omega)$ function.\\ 
We conclude this section by recalling some known properties about rearrangements that we will use in the proof of  the main theorem.
The following result is the well- known  Hardy-Littlewood inequality (see \cite{kesavan2006symmetrization})
\begin{equation}
\int_{\Omega} |f(x)g(x)| \,dx \le \int_0^{|\Omega|} f^*(s)g^*(s)\,ds.
\end{equation}
So, if we consider $g$ as the characteristic function of the set $\{x\in\Omega : u(x)>t\}$, for some measurable function $u:\Omega \rightarrow \mathbb{R}$ and $t\ge 0$, then we get
\begin{equation}\label{fleqf*}
\int_{\{u>t\}} f(x)\,dx \le \int_0^{\mu(t)} f^*(s)\,ds,
\end{equation}
where, again, $\mu(t)$ is the distribution function of $u$.
Finally we recall the definition of dominated rearrangements (see for instance \cite{alt} and \cite{chong-rice}).
\begin{defn}
    Let $f,g \in L^1(\Omega)$ be nonnegative functions. We say that $g$ is dominated by $f$ and we write $g\prec f$ if the following two statements hold
\begin{item}
\item[i)] $\displaystyle\int_{0}^s g^* (t)\,dt \le \displaystyle\int_{0}^s f^* (t)\,dt $; \\
\item[ii)]$\displaystyle\int_{0}^{|\Omega|} g^* (t)\,dt =\displaystyle\int_{0}^{|\Omega|} f^* (t)\,dt $.
\end{item}
\end{defn}
In \cite{alt} the authors prove the following result
\begin{prop}
    \label{domi}
Let $f,g,h$ be positive and such that  $hf, hg\in L^1(\Omega)$. Let $F$ be a convex, nonnegative function such that $F(0)=0$. If $hg \prec hf$ Then 
\[
\int_0^{|\Omega|}h^* F(g^*) \, dt \le \int_0^{|\Omega|}h^* F(f^*) \, dt.
\]
Moreover, if $F$ is strictly convex the equality holds if and only if $f^*\equiv g^*$ a.e. in $[0,|\Omega|]$.     
\end{prop}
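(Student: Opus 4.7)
The plan is to adapt the classical Chiti symmetrization scheme to the weighted anisotropic setting. After normalizing the positive first eigenfunction $u$ (whose sign is guaranteed by the results of Section~3), I would integrate the equation \eqref{APQ} over the super-level set $\{u>t\}$. Using the divergence theorem together with \eqref{prodsc} and the anisotropic coarea identity \eqref{AFLTAB}, this produces
\[
\int_{\partial^*\{u>t\}}\frac{H(\nabla u)^p}{|\nabla u|}\,d\mathcal{H}^{n-1} \;=\; \lambda^H_{p,q}(\Omega)\,\|u\|_{q,m}^{p-q}\int_{\{u>t\}} m\,u^{q-1}\,dx .
\]
On the left I apply Hölder's inequality, using the factor $-\mu'(t) = \int_{\partial^*\{u>t\}}|\nabla u|^{-1}\,d\mathcal{H}^{n-1}$ from coarea and the anisotropic isoperimetric inequality \eqref{IIA} to bound the perimeter $P_H(\{u>t\})$ from below; on the right I apply the Hardy--Littlewood inequality \eqref{fleqf*}. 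Passing to the variable $s = \mu(t)$ and expressing everything in terms of the decreasing rearrangement $u^*$ yields, for a.e.\ $s \in (0,|\Omega|)$, the differential inequality
\[
(n k_n^{1/n})^p\,s^{p(1-1/n)} \;\le\; \lambda^H_{p,q}(\Omega)\,\|u\|_{q,m}^{p-q}\,\bigl(-u^{*\prime}(s)\bigr)^{p-1}\!\int_0^s m^*(\sigma)\,(u^*(\sigma))^{q-1}\,d\sigma .
\]

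Next I introduce the comparison. Choose $R>0$ so that the first eigenvalue on the Wulff shape $\mathcal{W}_R$ with symmetrized weight $m^\star$ equals $\lambda^H_{p,q}(\Omega)$; by the Faber--Krahn-type inequality of Section~3 this forces $|\mathcal{W}_R|\le|\Omega|$. Let $z$ be the corresponding positive, Wulff-symmetric first eigenfunction on $\mathcal{W}_R$, normalized by $\|z\|_{L^q(\mathcal{W}_R, m^\star)} = \|u\|_{L^q(\Omega,m)}$. Since $z$ is strictly decreasing in $H^\circ$, every inequality used above holds with \emph{equality} for $z^*$ on $(0,|\mathcal{W}_R|)$. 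Comparing the two ODE-type inequalities, arguing by contradiction that a first crossing $u^*>z^*$ on some subinterval combined with the equal $L^q(m^*)$-masses is impossible, gives the pointwise bound $u^*(s)\le z^*(s)$ for all $s\in[0,|\mathcal{W}_R|]$, while $u^*$ may remain positive on $[|\mathcal{W}_R|,|\Omega|]$.

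From the pointwise bound and the matching $L^q$-masses one deduces the majorization $u\prec z$ (weighted by $m^*$) in the sense of the definition preceding Proposition~\ref{domi}. Applying Proposition~\ref{domi} with $h=m^*$ and the strictly convex function $F(\tau)=\tau^{r/q}$ (valid since $r\ge q$) yields
\[
\int_\Omega m\,u^r\,dx \;\le\; \int_{\mathcal{W}_R} m^\star\,z^r\,dx,
\]
and an explicit integration of the radial profile $z$ on the Wulff shape bounds $\|z\|_{L^r(\mathcal{W}_R,m^\star)}$ in terms of $\|u\|_{L^q(\Omega,m)}=\|z\|_{L^q(\mathcal{W}_R,m^\star)}$, $\lambda^H_{p,q}(\Omega)$ and $n,p,q,r$, proving (i). For (ii), the same pointwise comparison yields $\|u\|_{L^\infty(\Omega)}=u^*(0)\le z^*(0)=\|z\|_{L^\infty(\mathcal{W}_R)}$; bounding $\|z\|_{L^\infty}$ explicitly by $\|z\|_{L^q(\mathcal{W}_R,m^\star)}$ reduces the claim to $\|u\|_{L^q(\Omega,m)}\le C(n,p,q,r,\lambda)\|u\|_{L^r(\Omega,m)}$, which follows by a standard H\"older/bootstrap argument, invoking the just-obtained $L^\infty$ bound when $r<q$.

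The equality cases are extracted by tracing rigidity through the chain: equality in \eqref{IIA} forces every $\{u>t\}$ to be a translate of a Wulff shape, equality in the P\'olya--Szeg\H{o} principle via Theorem~\ref{BZanisotropic} gives $u=u^\star$ up to translation, and equality in Proposition~\ref{domi} with the strictly convex $F$ forces $u^*\equiv z^*$, hence $\Omega$ itself must be a Wulff shape. The main obstacle I expect is the Chiti crossing argument in the weighted setting: the right-hand side of the differential inequality depends on the running integral $\int_0^s m^*(u^*)^{q-1}\,d\sigma$ rather than a simple power of $s$, so the usual single-crossing argument has to be recast in terms of a suitable ``mass deficit'' functional whose monotonicity along a hypothetical crossing delivers the contradiction, and one must handle carefully the interplay between the decreasing rearrangement $m^*$ (appearing via Hardy--Littlewood) and the convex symmetrization $m^\star$ (appearing in the model problem), relying on their equimeasurability to match integrals.
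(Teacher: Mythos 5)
Your proposal does not address the statement it is supposed to prove. Proposition \ref{domi} is a self-contained rearrangement/majorization lemma: given positive $f,g,h$ with $hf,hg\in L^1(\Omega)$, a convex nonnegative $F$ with $F(0)=0$, and the domination $hg\prec hf$ (that is, $\int_0^s (hg)^*\,dt\le\int_0^s (hf)^*\,dt$ for every $s$, with equality at $s=|\Omega|$), one must deduce $\int_0^{|\Omega|}h^*F(g^*)\,dt\le\int_0^{|\Omega|}h^*F(f^*)\,dt$, together with the rigidity statement $f^*\equiv g^*$ a.e.\ when $F$ is strictly convex. Nothing in your text engages with this claim: what you sketch is the proof of the main Chiti-type theorem of the paper (the Talenti differential inequality \eqref{diffinequality}, the comparison with the eigenfunction of the symmetric problem on $\Omega^\star_\lambda$, the single-crossing argument), i.e.\ the content of Section 4, not of this proposition. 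Worse, midway through you explicitly \emph{apply} Proposition \ref{domi} with $h=m^*$ and $F(\tau)=\tau^{r/q}$, so read as a proof of Proposition \ref{domi} the argument is circular: it invokes the very statement to be established and never proves it. Note also that the paper itself does not reprove this lemma; it quotes it from \cite{alt}, so there is no internal proof to match --- but a correct submission must still argue at the level of generality of the statement (arbitrary $f,g,h$ and convex $F$), independently of the eigenvalue problem.

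Concretely, a proof would run along classical Hardy--Littlewood--P\'olya lines: reduce, by approximation, to functions $F$ of the form $F(t)=(t-c)_+$ plus a linear part (every convex $F$ with $F(0)=0$ is a mixture of such pieces), and for these the hypothesis $\int_0^s (hg)^*\,dt\le\int_0^s (hf)^*\,dt$ with equal total integrals yields the desired inequality after an Abel-summation (integration by parts) in $s$; strict convexity then forces equality of the partial integrals for all $s$, whence $f^*\equiv g^*$ a.e.\ on $[0,|\Omega|]$. None of these ingredients appears in your proposal, so as it stands there is a genuine gap: the statement is simply not proved.
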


%In particular \eqref{conservationnorms} and Theorem \ref{polyaszego} allows us to prove the following anisotropic Faber-Krahn inequality for $\lambda_{p,q}(\Omega)$ and Theorem \ref{BZanisotropic} its related equality case.

\section{The (p,q)-anisotropic Laplacian}
In this section we study the main properties of \eqref{varcar} and of the corresponding minimizers.
Let $\Omega\subset \R^n$, $n\ge 2$  be an open, bounded and connected set. Let $m\in L^{\infty}(\Omega)$ be a positive function and  $p, q$ be such that $1<p<\infty$, and  $1<q<p^*$, where $p^*=np/(n-p)$, if $p<n$, and $p^*=\infty$, if $p\ge n$. A function $v\in W_0^{1,p}(\Omega)$ is a weak solution to the problem \eqref{APQ} corresponding to $\lambda$ if
\begin{equation}
\label{ws}
\int_{\Omega}(H(\nabla v))^{p-1}H_{\xi}(\nabla v) \cdot \nabla \varphi \, dx=\lambda \|v\|_{q,m}^{p-q}\int_{\Omega}m(x)\,|v|^{q-2}\,v \,\varphi\,dx,
\end{equation}
for every $\varphi \in W_0^{1,p}(\Omega)$.
By standard argument of calculus of variations it is not difficult to prove the following result
\begin{thm}\label{es-cv-segno}
Let $n\ge 2$ and $\Omega\subset \R^n$, be an open, bounded and connected set and let $p,q$ and $m$ be as above. Then $\lambda^H_{p,q}(\Omega)$, defined in \eqref{varcar}, is strictly positive and it is actually a minimum. Moreover any minimizer  is a weak solution to the problem \eqref{APQ}, with $\lambda=\lambda^H_{p,q}(\Omega)$ and it has constant sign.
\end{thm}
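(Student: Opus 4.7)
The plan is to break the statement into four pieces: positivity, attainment, derivation of the Euler--Lagrange equation, and the constant sign property. I would proceed in that order.

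\textbf{Positivity and attainment.} For positivity, the lower bound $H(\xi)\ge\gamma|\xi|$ from \eqref{control} reduces the numerator to a multiple of $\int_\Omega|\nabla u|^p\,dx$, while boundedness of $m$ together with Sobolev's embedding (admissible since $q<p^*$) controls the denominator by $\int_\Omega|\nabla u|^p\,dx$; this yields a uniform positive lower bound on the Rayleigh quotient. For attainment, I would fix a minimizing sequence $u_k$ normalized by $\|u_k\|_{q,m}=1$; then $\int_\Omega H(\nabla u_k)^p\,dx$ is bounded, hence so is $\|u_k\|_{W^{1,p}_0}$ by \eqref{control}. Up to a subsequence, $u_k\rightharpoonup u$ in $W^{1,p}_0(\Omega)$ and, by Rellich--Kondrachov (using $q<p^*$), $u_k\to u$ strongly in $L^q(\Omega,m)$, so $\|u\|_{q,m}=1$. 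Since $\xi\mapsto H(\xi)^p$ is convex and continuous, the functional $u\mapsto\int_\Omega H(\nabla u)^p\,dx$ is sequentially weakly lower semicontinuous, so $u$ achieves the infimum.

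\textbf{Euler--Lagrange equation.} Let $u$ be a minimizer. For any test function $\varphi\in W^{1,p}_0(\Omega)$ I would compute $\frac{d}{dt}\big|_{t=0}$ of the Rayleigh quotient at $u+t\varphi$. Differentiating numerator and denominator yields
\[
p\!\int_\Omega H(\nabla u)^{p-1}H_\xi(\nabla u)\cdot\nabla\varphi\,dx\,\|u\|_{q,m}^p = p\,\lambda^H_{p,q}(\Omega)\,\|u\|_{q,m}^{p-q}\int_\Omega m|u|^{q-2}u\varphi\,dx,
\]
after using $\int_\Omega H(\nabla u)^p\,dx=\lambda^H_{p,q}(\Omega)\|u\|_{q,m}^p$ and simplifying. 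This is precisely \eqref{ws} with $\lambda=\lambda^H_{p,q}(\Omega)$.

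\textbf{Constant sign.} The key observation is that the functional is invariant under $u\mapsto|u|$: indeed $H$ is even (by \eqref{HP}), and $|\nabla|u||=|\nabla u|$ a.e.\ with $\nabla|u|=\pm\nabla u$ on $\{u\ne 0\}$ and $\nabla|u|=0=\nabla u$ a.e.\ on $\{u=0\}$, so $H(\nabla|u|)=H(\nabla u)$ a.e. Therefore $|u|$ is also a minimizer, hence a nonnegative weak solution of \eqref{APQ}. Standard regularity for the anisotropic $p$-Laplacian then gives $|u|\in C^1(\Omega)$, and applying a strong maximum principle of Vazquez type (adapted to $\mathcal{L}_p$) to the inequality $-\mathcal{L}_p(|u|)\ge 0$ forces $|u|>0$ throughout $\Omega$. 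By continuity of $u$, this rules out sign changes, so $u$ has constant sign.

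\textbf{Main obstacle.} The routine pieces are positivity, weak lower semicontinuity, and the differentiation of the Rayleigh quotient. The most delicate point is the constant-sign argument: one needs the strong maximum principle in the anisotropic setting. This is available in the literature (for instance, via the results in the references to \cite{bfk} on the anisotropic $p$-Laplacian), but its invocation must be justified by first checking that $|u|$ is continuous and that the nonlinearity $\lambda\|u\|_{q,m}^{p-q}m(x)|u|^{q-1}$ lies in the class to which those maximum principles apply; positivity of $m$ is crucial here.
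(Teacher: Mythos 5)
Your proof is correct and is precisely the standard calculus-of-variations argument that the paper invokes without writing out (the theorem is stated with only the remark that it follows ``by standard argument of calculus of variations''): coercivity via \eqref{control} and Sobolev embedding, the direct method with Rellich--Kondrachov and convexity of $\xi\mapsto H(\xi)^p$ for weak lower semicontinuity, differentiation of the Rayleigh quotient, and $|u|$-invariance plus interior regularity and the strong maximum principle for the sign property. The only blemish is the displayed Euler--Lagrange identity, whose right-hand side is missing the factor $\|u\|_{q,m}^{p}$ coming from $N(0)D'(0)$; dividing the correct identity by $p\,\|u\|_{q,m}^{p}$ does yield \eqref{ws}, so the conclusion stands.
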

As regard the simplicity,  we have
\begin{thm}
\label{simpl}
Let $n\ge 2$ and $\Omega\subset \R^n$, be an open, bounded and connected set and let $p$ and $m$ be as above and let $1<q\le p$. Then $\lambda^H_{p,q}(\Omega)$ is simple, that is there exists a unique corresponding eigenfunction up to multiplicative constants. 
\end{thm}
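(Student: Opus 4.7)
The plan is to adapt the Díaz--Saá / anisotropic Picone inequality strategy, which is the standard route to simplicity in the sub-homogeneous regime $1 < q \le p$.

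First, let $u$ and $v$ be two eigenfunctions corresponding to $\lambda := \lambda^H_{p,q}(\Omega)$. By Theorem \ref{es-cv-segno} we may assume both are positive, and after normalization we may also assume $\|u\|_{q,m}=\|v\|_{q,m}=1$, so that the weak formulation \eqref{ws} reads
\begin{equation*}
\int_\Omega H(\nabla u)^{p-1} H_\xi(\nabla u)\cdot\nabla\varphi\,dx = \lambda\int_\Omega m\,u^{q-1}\varphi\,dx,
\end{equation*}
and similarly for $v$. Standard regularity for the anisotropic $p$-Laplacian (together with the strong maximum principle / Hopf-type lemma) gives $u,v>0$ in $\Omega$ and $u/v$, $v/u$ bounded on compact subsets; by a cut-off plus approximation argument (e.g.\ replacing $u$ by $u+\eps$ and passing to the limit, or truncating the super-level sets of $u/v$) the functions
\begin{equation*}
\varphi_1=\frac{u^p-v^p}{u^{p-1}},\qquad \varphi_2=\frac{v^p-u^p}{v^{p-1}}
\end{equation*}
are admissible test functions in $W^{1,p}_0(\Omega)$.

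The heart of the proof is the anisotropic Picone inequality: for any positive $u,v$ with enough regularity,
\begin{equation*}
H(\nabla u)^{p-1}H_\xi(\nabla u)\cdot\nabla\!\left(\frac{v^p}{u^{p-1}}\right)\le H(\nabla v)^p,
\end{equation*}
with equality (pointwise a.e.) iff $v/u$ is constant; this follows from the convexity and $1$-homogeneity of $H^p$ (in particular $H_\xi(\xi)\cdot\xi=H(\xi)$) by the usual pointwise computation. Testing the equation for $u$ with $\varphi_1$ and for $v$ with $\varphi_2$, and using $H_\xi(\nabla u)\cdot\nabla u=H(\nabla u)$, the sum of the two left-hand sides becomes
\begin{equation*}
\int_\Omega H(\nabla u)^p\,dx+\int_\Omega H(\nabla v)^p\,dx-\int_\Omega H(\nabla u)^{p-1}H_\xi(\nabla u)\!\cdot\!\nabla\!\left(\tfrac{v^p}{u^{p-1}}\right)dx-\int_\Omega H(\nabla v)^{p-1}H_\xi(\nabla v)\!\cdot\!\nabla\!\left(\tfrac{u^p}{v^{p-1}}\right)dx,
\end{equation*}
which by Picone is $\ge 0$.

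On the right-hand side, the normalization $\|u\|_{q,m}=\|v\|_{q,m}=1$ makes the prefactor $\|\cdot\|_{q,m}^{p-q}=1$, and the two terms combine to
\begin{equation*}
\lambda\int_\Omega m\,(u^p-v^p)(u^{q-p}-v^{q-p})\,dx.
\end{equation*}
Since $1<q\le p$, the map $t\mapsto t^{q-p}$ is non-increasing on $(0,\infty)$, so the integrand is non-positive pointwise, and this integral is $\le 0$. Combining, both sides must vanish, which forces equality in the Picone inequality almost everywhere; therefore $v/u$ is constant on $\Omega$, proving simplicity.

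The main obstacle I expect is the rigorous justification of admissibility of the test functions $\varphi_1,\varphi_2$ up to the boundary, since $u,v$ vanish there. This is handled by the usual truncation trick, using the Hopf-type lemma for $\mathcal{L}_p$ to guarantee $u/v$ and $v/u$ stay bounded near $\partial\Omega$ so that the cut-offs converge in $W^{1,p}_0(\Omega)$; everything else is a clean consequence of Picone plus the monotonicity of $t^{q-p}$ for $q\le p$.
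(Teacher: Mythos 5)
Your argument is correct in substance, and it is worth noting that the paper itself does not prove Theorem \ref{simpl} at all: it only cites \cite{kawohl2007simplicity}, where simplicity is obtained for a more general class of quasilinear operators by essentially the same circle of ideas you use (Picone/Diaz--Saa, equivalently the ``hidden convexity'' of $w\mapsto \int H(\nabla w^{1/p})^p$). Your bookkeeping checks out: with $\|u\|_{q,m}=\|v\|_{q,m}=1$ the two right-hand sides sum to $\lambda\int_\Omega m\,(u^p-v^p)(u^{q-p}-v^{q-p})\,dx\le 0$ because $t\mapsto t^{q-p}$ is non-increasing for $q\le p$, while the anisotropic Picone inequality of \cite{brasco2014convexity} (the case $q=p$ of the ``general Picone'' already invoked in the proof of Theorem \ref{car-seg}) makes the sum of the left-hand sides nonnegative, and its equality case plus connectedness of $\Omega$ gives $v=cu$. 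The one point where you should be careful is the admissibility of the test functions: since $\Omega$ is only assumed open, bounded and connected, no boundary regularity is available, so the Hopf-lemma route (bounding $u/v$ near $\partial\Omega$) is not justified under the stated hypotheses. The correct remedy is the other one you mention, namely testing with $\varphi_1=\bigl(u_\eps^p-v_\eps^p\bigr)/u_\eps^{p-1}$ where $u_\eps=u+\eps$, $v_\eps=v+\eps$: then $v_\eps/u_\eps\le 1+\|v\|_\infty/\eps$ is bounded, $\varphi_1\in W^{1,p}_0(\Omega)$ for each fixed $\eps>0$ (it vanishes on $\partial\Omega$ because $u_\eps=v_\eps=\eps$ there), and one passes to the limit $\eps\to 0$ by dominated/monotone convergence, noting $u\in L^\infty$ and $mu^{q-1}\varphi_1$ is controlled. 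With that substitution your proof is complete and self-contained, which is arguably an improvement over the paper's bare citation.
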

The proof of the previous result is contained in \cite{kawohl2007simplicity}, where the authors consider a more general class of quasilinear operators.
Finally  we have the following 
\begin{thm}
\label{car-seg}
Let $n\ge 2$ and let $\Omega\subset \R^n$ be an open, bounded and connected set. Let $p$ and $m$ be as above and let $1<q\le p$. Any nonnegative  function $v \in W_0^{1,p}(\Omega)$ ,  which is a weak solution to the problem \eqref{APQ}, for some $\lambda>0$, is a first eigenfunction, that is $\lambda =\lambda^H_{p,q}(\Omega)$.
\end{thm}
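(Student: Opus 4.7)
The plan is to establish the two inequalities $\lambda \ge \lambda^H_{p,q}(\Omega)$ and $\lambda \le \lambda^H_{p,q}(\Omega)$ separately. The first direction is essentially immediate from the variational characterization: since $v \in W^{1,p}_0(\Omega)$ is a nonnegative weak solution, I would test \eqref{ws} against $\varphi = v$ itself. Using the Euler-type identity $H_\xi(\xi)\cdot\xi = H(\xi)$ from \eqref{prodsc}, the left-hand side collapses to $\int_\Omega H(\nabla v)^p\,dx$, while the right-hand side becomes $\lambda\|v\|_{q,m}^{p-q}\int_\Omega m\,v^q\,dx = \lambda\|v\|_{q,m}^p$. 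Hence the Rayleigh quotient in \eqref{varcar} evaluated at $v$ equals exactly $\lambda$, so $\lambda^H_{p,q}(\Omega) \le \lambda$.

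The reverse inequality is the substantive part, and the hypothesis $1 < q \le p$ will be used in a crucial way. Let $u_1 > 0$ be the positive first eigenfunction provided by Theorems \ref{es-cv-segno} and \ref{simpl}, and, via a strong maximum principle for $\mathcal L_p$ with nonnegative right-hand side, assume $v > 0$ in $\Omega$. I would invoke an anisotropic Picone-type inequality
$$H(\nabla u_1)^p \;\ge\; H(\nabla v)^{p-1}\,H_\xi(\nabla v)\cdot\nabla\!\left(\frac{u_1^p}{v^{p-1}}\right),$$
which follows from the convexity of $\xi\mapsto H(\xi)^p$ essentially as in the Euclidean case. Plugging $u_1^p/(v+\varepsilon)^{p-1}$ as a test function in \eqref{ws}, combining with the Picone bound, and sending $\varepsilon \to 0^+$ by monotone/dominated convergence yields
$$\int_\Omega H(\nabla u_1)^p\,dx \;\ge\; \lambda\,\|v\|_{q,m}^{p-q}\int_\Omega m\,u_1^{p}\,v^{\,q-p}\,dx.$$

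It then remains to bound the integral on the right from below. Writing $m\,u_1^q = \bigl(m\,u_1^p\,v^{q-p}\bigr)^{q/p}\bigl(m\,v^q\bigr)^{(p-q)/p}$ and applying H\"older's inequality with the conjugate exponents $p/q$ and $p/(p-q)$ (well-defined precisely because $q\le p$) gives
$$\|u_1\|_{q,m}^q \;\le\; \left(\int_\Omega m\,u_1^p\,v^{q-p}\,dx\right)^{q/p}\|v\|_{q,m}^{q(p-q)/p},$$
which rearranges to $\int_\Omega m\,u_1^p\,v^{q-p}\,dx \ge \|u_1\|_{q,m}^p\,\|v\|_{q,m}^{-(p-q)}$. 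Substituting this into the Picone bound, the factors $\|v\|_{q,m}^{p-q}$ cancel, and recalling that $\int_\Omega H(\nabla u_1)^p\,dx = \lambda^H_{p,q}(\Omega)\,\|u_1\|_{q,m}^p$, I conclude $\lambda^H_{p,q}(\Omega) \ge \lambda$.

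The main technical hurdle is the Picone step: one must verify the anisotropic Picone inequality (a consequence of convexity of $H^p$ and the chain rule, but requiring some care at points where $\nabla v$ vanishes), establish $v > 0$ in $\Omega$ via a Vazquez/Harnack-type strong maximum principle for $\mathcal L_p$, and justify passage to the limit as $\varepsilon \to 0^+$ with the test function $u_1^p/(v+\varepsilon)^{p-1}$. Once this analytical framework is in place, the remainder is a short algebraic chain, and the constraint $q \le p$ is precisely what makes the H\"older step valid.
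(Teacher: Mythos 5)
Your proof is correct, and it rests on the same core mechanism as the paper's argument --- a Picone-type inequality comparing $v$ with the first eigenfunction, with the restriction $q\le p$ entering through a H\"older step --- but the two proofs differ in which Picone inequality is invoked and where H\"older is applied. The paper tests the equation for $v$ with $u^q/v^{q-1}$ and uses the \emph{general} Picone inequality of Brasco--Franzina (their Proposition 2.9), which bounds the resulting gradient term by an integral of a product of powers of $H(\nabla u)$ and $H(\nabla v)$; H\"older is then applied to that \emph{gradient} integral, and the normalization $\|u\|_{q,m}=\|v\|_{q,m}$ is imposed to match constants. You instead use the classical anisotropic Picone inequality with the test function $u_1^p/v^{p-1}$ and move the H\"older step to the \emph{potential} term $\int_\Omega m\,u_1^p v^{q-p}\,dx$. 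This buys you something: only the standard $H^p$-convexity version of Picone is needed, no normalization is required since the $\|v\|_{q,m}^{p-q}$ factors cancel, and you make the easy direction $\lambda\ge\lambda^H_{p,q}(\Omega)$ explicit where the paper leaves it implicit in the variational characterization. The technical caveats you flag --- positivity of $v$ via the strong maximum principle, admissibility of $u_1^p/(v+\varepsilon)^{p-1}$ as a test function (which uses $u_1\in L^\infty(\Omega)$), and the passage $\varepsilon\to 0^+$ (monotone convergence applies because $(v+\varepsilon)^{q-p}\uparrow v^{q-p}$ precisely when $q\le p$, and it also yields finiteness of $\int_\Omega m\,u_1^p v^{q-p}\,dx$ a posteriori) --- are exactly the right ones; the paper faces the analogous issues with its own test function and likewise defers them to standard arguments and the literature.
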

\begin{proof}
The proof follows by standard arguments and a general Picone inequality contained in \cite{brasco2014convexity}.

Let $v$ be a non-negative weak solution to the problem  \eqref{APQ} corresponding to $\lambda$. By the strong maximum principle we have that $v>0$ in $\Omega.$ Let  $u$ be the first positive  eigenfunction corresponding to $\lambda^H_{p,q}(\Omega)$ such that 
\begin{equation}
\label{norm}
\|u\|_{L^q(\Omega,m)}=\|v\|_{L^q(\Omega,m)}.
\end{equation}
Then 
\begin{equation}
\label{f-eig}
\int_{\Omega}(H(\nabla u))^p\,dx=\lambda^H_{p,q}(\Omega)\left(\int_{\Omega}m(x) \,u^q \,dx \right)^{\frac pq}.
\end{equation}
Being $v$ a weak positive solution to \eqref{APQ} corresponding to $\lambda$, we can chose  $\varphi =\displaystyle \frac{u^q}{v^{q-1}}$ as test function in \eqref{ws} obtaining
\begin{multline}
 \label{wsv}   
 \int_{\Omega}\left((H(\nabla v)\right)^{p-1}H_{\xi}(\nabla v )  \cdot \nabla \left( \frac{u^q}{v^{q-1}}\right)\,dx 
=\lambda \|m^{\frac{1}{q}} v\|_q^{p-q}\int_{\Omega}m(x)\,u^q\,dx \\
= \lambda \left(\int_{\Omega}m(x) \,u^q \,dx \right)^{\frac pq},
%-\\ - (q-1)\int_{\Omega}\left(\frac{u}{v}\right)^{q}\left((H(\nabla v)\right)^{p-1}H_{\xi}(\nabla v)  \cdot \nabla v\,dx  \\
%q\int_{\Omega}\left(\frac{u}{v}\right)^{q-1}\left((H(\nabla v)\right)^{p-1}H_{\xi}(\nabla v)  \cdot \nabla u\,dx -\\ - (q-1)\int_{\Omega}\left(\frac{u}{v}\right)^{q}\left((H(\nabla v)\right)^{p}\,dx
%\\
%= \lambda \left(\int_{\Omega}m(x) \,u^q \,dx \right)^{\frac pq}
\end{multline}
where last equality follows by \eqref{norm}.
In the left-hand side, we can apply the general Picone inequality (see Proposition 2.9 in \cite{brasco2014convexity}) and we have 
\begin{equation*}
\int_{\Omega}\left(H(\nabla v)\right)^q \left(H(\nabla u)\right)^{p-q} \,dx \ge \lambda \left(\int_{\Omega}m(x) \,u^q \,dx \right)^{\frac pq}.
\end{equation*}
By the H\"older inequality, the normalization \eqref{norm} and \eqref{f-eig} we get that 
$\lambda^{H}_{p,q}(\Omega)\ge \lambda$, that implies $u=v$.
\end{proof}
\subsection{The case $\Omega= \mathcal{W}_R$}
In  this subsection we study the problem \eqref{APQ} when $\Omega$ is a Wulff shape. In this case the eigenfunctions inherit some symmetry properties.
Let be $\Omega= \mathcal{W}_R$ and let $m\in L^{\infty}(\mathcal{W}_R)$ be a positive function such that $m(x)=m^{\star}(x)$. Then problem \eqref{APQ} becomes
\begin{equation} \label{ARPQ}
\begin{cases}
-\mathcal{L}_{p}(v)=\lambda m^{\star}(x) \| v\|_{q, m^\star}^{p-q}|v|^{q-2}v & \mbox{in}\ \mathcal{W}_R\vspace{0.2cm}\\
v=0&\mbox{on}\ \partial \mathcal{W}_R\vspace{0.2cm}.
\end{cases}
\end{equation}
The following result holds
\begin{prop}
\label{rad}
Let $1<p<\infty$ and $1<q\le p$. Let $v \in C^1(\overline{\Omega})\cap C^{1,\alpha}(\Omega)$ be a first  positive  eigenfunction to the problem \eqref{ARPQ}. Then there exists a decreasing function $\rho(r)$, $r \in [0,R]$, such that $\rho \in C^{\infty}((0,R))\cap C^1([0,R])$, $\rho'(0)=0$ and $v(x)=\rho(H^o(x))$.
\end{prop}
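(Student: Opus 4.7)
The strategy is to exploit the simplicity of the first eigenvalue (Theorem \ref{simpl}) together with the P\'olya--Szeg\"o principle and the assumption $m=m^\star$, in order to show that the convex symmetrization $v^\star$ is itself a first eigenfunction. Since simplicity forces $v^\star$ to be a multiple of $v$, it follows that $v$ is already convexly symmetric, i.e.\ of the form $\rho(H^\circ(x))$.

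First I would use $v^\star$ as a competitor in \eqref{varcar}. By Theorem \ref{polyaszego}
\[
\int_{\mathcal W_R} H(\nabla v^\star)^p\,dx \le \int_{\mathcal W_R} H(\nabla v)^p\,dx.
\]
For the weighted $L^q$ denominator, I would apply the Hardy--Littlewood inequality to $m$ and $v^q$ together with equimeasurability, noting that the convex rearrangement shares its level sets with the decreasing rearrangement through the change of variables $s = k_n (H^\circ(x))^n$; combined with the hypothesis $m = m^\star$ this gives
\[
\int_{\mathcal W_R} m\, v^q\,dx \le \int_0^{|\mathcal W_R|} m^*(s)\, (v^*(s))^q\,ds = \int_{\mathcal W_R} m^\star (v^\star)^q\,dx = \int_{\mathcal W_R} m\,(v^\star)^q\,dx.
\]
Hence the Rayleigh quotient evaluated at $v^\star$ is bounded above by that of $v$, which equals $\lambda^H_{p,q}(\mathcal W_R)$. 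Since the quotient is by definition bounded below by $\lambda^H_{p,q}(\mathcal W_R)$, both inequalities must be equalities and $v^\star$ is a first eigenfunction.

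Next, invoking Theorem \ref{simpl} (simplicity holds since $1<q\le p$), there exists $c>0$ with $v^\star = c\,v$. Equimeasurability gives $\|v^\star\|_{L^p(\mathcal W_R)} = \|v\|_{L^p(\mathcal W_R)}$, so $c=1$ and $v = v^\star$. By the very definition of convex symmetrization this means $v(x) = \rho(H^\circ(x))$ with $\rho(r) = v^*(k_n r^n)$, which is non-increasing in $r$ and actually strictly decreasing on $\{0 < v < \max v\}$ by positivity of $v$ and the fact that $v \in C^1(\overline{\mathcal W_R})$ (so level sets have zero measure).

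Finally, to get the regularity of $\rho$ and the condition $\rho'(0)=0$, I would substitute $v(x)=\rho(H^\circ(x))$ into the equation \eqref{ARPQ}. Using $H(H^\circ_\xi)=1$ from \eqref{HHcirc} and $H^\circ(x) H_\xi(H^\circ_\xi(x)) = x$ from \eqref{Hcircxi}, a direct computation (together with the homogeneity relations \eqref{prodsc}) reduces the anisotropic PDE to the one-dimensional degenerate ODE
\[
-\bigl(r^{n-1}|\rho'(r)|^{p-2}\rho'(r)\bigr)' = \lambda\, r^{n-1}\, \tilde m(r)\,\|v\|_{q,m}^{p-q}\,\rho(r)^{q-1},\qquad r\in (0,R),
\]
where $\tilde m(r)$ is the radial profile of $m^\star$. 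Standard ODE theory for the radial $p$-Laplacian then yields $\rho\in C^\infty((0,R))$ away from the zeros of $\rho'$ (combined with the $C^{1,\alpha}$ regularity assumed for $v$, which transfers to $\rho$ up to $r=R$); the boundary condition $\rho'(0)=0$ follows by integrating the ODE on $(0,r)$ and letting $r\to 0^+$, since the right-hand side is bounded and so $r^{n-1}|\rho'(r)|^{p-2}\rho'(r)\to 0$, forcing $\rho'(0)=0$.

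The main obstacle here is verifying the weighted Hardy--Littlewood step cleanly, since the natural equimeasurability is with respect to the decreasing rearrangement $v^*$ rather than the convex one $v^\star$; the conversion has to be done via the explicit change of variables $s=k_n(H^\circ(x))^n$ and the hypothesis $m=m^\star$. Once that is in hand, the simplicity argument is routine and the ODE analysis is standard.
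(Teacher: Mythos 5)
Your proof is essentially correct, but it follows a genuinely different route from the paper. The paper does not symmetrize $v$ at all: it first considers the \emph{Euclidean} weighted radial problem \eqref{euclideanradial} on the ball $B_R$ with the radial weight $\tilde m(r)=m^*(k_nr^n)$, uses simplicity there to get a radial profile $\rho_p$ solving the one--dimensional ODE \eqref{radialeucl}, transplants it to the anisotropic setting by setting $w=\rho_p(H^\circ(x))$ (checking via \eqref{prodsc}--\eqref{Hcircxi} that $w$ solves \eqref{ARPQ}), and then invokes simplicity together with Theorem \ref{car-seg} to conclude $v=w$. You instead show directly that $v^\star$ is an admissible competitor whose Rayleigh quotient does not exceed that of $v$ (P\'olya--Szeg\"o for the numerator, Hardy--Littlewood plus $m=m^\star$ and the change of variables $s=k_n(H^\circ(x))^n$ for the denominator), so $v^\star$ is a minimizer, hence an eigenfunction, hence equal to $v$ by simplicity and equimeasurability. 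Your route avoids the transplantation computation at the level of identifying the symmetry, but you still need the ODE at the end for $\rho'(0)=0$ and the monotonicity, so the two arguments converge there; the paper's route has the side benefit (used in the subsequent Remark) of identifying $\lambda^H_{p,q}(\mathcal W_R)$ with the first eigenvalue of the Euclidean radial problem.

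Two points to tighten. First, your justification that $\rho$ is \emph{strictly} decreasing --- ``level sets have zero measure by positivity of $v$ and $v\in C^1$'' --- is not valid as stated: a $C^1$ positive function can perfectly well be constant on a set of positive measure, so equimeasurability alone does not rule out flat pieces of $v^*$. The correct argument is the one the paper uses (and which you in effect have available once you derive the ODE): integrating $-\bigl(r^{n-1}|\rho'|^{p-2}\rho'\bigr)'=\lambda r^{n-1}\tilde m\,\|v\|_{q,m}^{p-q}\rho^{q-1}>0$ from $0$ to $r$ gives $r^{n-1}|\rho'(r)|^{p-2}\rho'(r)<0$ for every $r\in(0,R)$, hence $\rho'<0$ on $(0,R)$; you should move the monotonicity claim after the ODE derivation. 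Second, the normalization step $v^\star=cv\Rightarrow c=1$ is fine, but make sure you state that $v^\star$ belongs to $W^{1,p}_0(\mathcal W_R)$ (which is part of the P\'olya--Szeg\"o statement, using that $\Omega^\star=\mathcal W_R$ because $\Omega$ is already a Wulff shape) so that it is indeed admissible in \eqref{varcar}.
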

\begin{proof}
By the simplicity we can assume that  $\|v\|_{L^q(\mathcal W_R,m^{\star})}=1$. 
 Let $B_R$ be the ball centered at the origin with radius $R>0$,  and let us consider the weighted p-Laplace eigenvalue problem in $B_R$
\begin{equation}\label{euclideanradial}
    \begin{cases}
-\Delta_p z=\lambda \tilde m(|x|)\|z\|_{q,m^{\star}}^{p-q}|z|^{q-2}z & \mbox{in}\ B_R\vspace{0.2cm}\\
z=0&\mbox{on}\ \partial B_R,\vspace{0.2cm}
\end{cases}
\end{equation}
%By the simplicity we can assume that  $\|v\|_{L^q(\mathcal W_R, m)}=1$.
where $\tilde m(r)=m^*(k_n r^n)$, $0 \le r\le R$.
Let $z$ be the positive eigenfunction corresponding to the first eigenvalue $\lambda^{\mathcal E}_{p,q}(B_R)$ to the problem \eqref{euclideanradial}, such that $\|z\|_{L^q(B_R,\tilde{m})}=\|v\|_{L^q(\mathcal W_R,m^{\star})}=1$.
Then uniqueness guarantees that $z$ is  radially symmetric, which means that there exists a positive one dimensional function $\rho_p:r\in[0,R]\to \mathbb{R}^+$
such that $z(x)=\rho_p(|x|)$, and $\rho_p $ solves the following problem
\begin{equation}\label{radialeucl}
\begin{cases}
    -(p-1)|\rho'_p|^{p-2}\rho''_p+\frac{n-1}{r}|\rho'_p|^{p-1}= \lambda^{\mathcal E}_{p,q}(B_R)\tilde m|\rho_p|^{q-2}\rho_p,& r \in (0,R) \\
\rho'_p(0)=\rho_p(R)=0.
\end{cases}
\end{equation}
In particular, integrating equation \eqref{radialeucl}, it is possible to see that $\rho'_p$ is zero only when $r=0$ and consequently that $\rho_p$ is strictly decreasing in $[0,R]$.
Now we can come back to the anisotropy. Indeed if we consider $w=\rho_p(H^\circ(x))$ then using properties \eqref{prodsc}-\eqref{Hcircxi} and the regularity of $H$, by construction, we obtain that $w(x)$ is a solution to problem \eqref{ARPQ}, which is positive and radial with respect to the anisotropic norm. The simplicity and Theorem \ref{car-seg} imply that $v=w$ and this concludes the proof.
\end{proof}
\begin{rem}
We stress that the proof of the previous result shows that the first eigenvalue $\lambda^H_{p,q}(\mathcal W_R)$ coincides with the first eigenvalue of problem \eqref{euclideanradial}. %of the weighted $(p,q)$-Laplacian $\lambda^{\mathcal E}_{p,q}(\mathcal B_R)$ of the Euclidean ball having the same radius $R$.
\end{rem}

\subsection{A Faber-Krahn type inequality}
\begin{thm}
Let $\Omega\in \mathbb{R}^n$, $n\ge 2$, be an open, bounded and connected set and let $1<q\le p$. Then 
\begin{equation} \label{FK}
    \lambda^H_{p,q}(\Omega)\ge \lambda^H_{p,q}(\Omega^{\star}),
\end{equation}
where $\Omega^\star$ is the Wulff shape such that $|\Omega^\star|=|\Omega|$. The equality case holds if and only if $\Omega=\Omega^{\star}$ and $m=m^{\star}$ a.e. in $\Omega$,  up to translations, where $m^{\star}$ is the convex symmetrization of $m$.
\end{thm}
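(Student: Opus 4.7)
The plan is to plug the convex symmetrization $u^\star$ of a positive first eigenfunction $u\in W^{1,p}_0(\Omega)$ for $\lambda^{H}_{p,q}(\Omega)$ (which exists by Theorem \ref{es-cv-segno}) as a competitor in the variational problem defining $\lambda^{H}_{p,q}(\Omega^\star)$. The numerator is controlled by the anisotropic P\'olya--Szeg\"o inequality (Theorem \ref{polyaszego}):
\[
\int_\Omega H(\nabla u)^p\,dx \ge \int_{\Omega^\star} H(\nabla u^\star)^p\,dx.
\]
For the denominator, I would apply the Hardy--Littlewood inequality \eqref{fleqf*} to $m$ and $|u|^q$, using that $(|u|^q)^*=(u^*)^q$ (since $t\mapsto t^q$ is increasing) together with \eqref{conservationnorms}, to obtain
\[
\int_\Omega m|u|^q\,dx \le \int_0^{|\Omega|} m^*(s)\bigl(u^*(s)\bigr)^q\,ds=\int_{\Omega^\star} m^\star (u^\star)^q\,dx.
\]
Since $u^\star\in W^{1,p}_0(\Omega^\star)$, combining the two estimates with the variational definition of $\lambda^H_{p,q}(\Omega^\star)$ yields \eqref{FK}.

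For the equality case, assume $\lambda^H_{p,q}(\Omega)=\lambda^H_{p,q}(\Omega^\star)$. Then each of the two inequalities above must be an equality, and $u^\star$ must itself realize the infimum in \eqref{varcar} on $\Omega^\star$ with weight $m^\star$; by Theorem \ref{es-cv-segno} it is an eigenfunction there, and by Proposition \ref{rad} together with the simplicity result Theorem \ref{simpl} it takes the radial form $u^\star(x)=\rho(H^\circ(x))$ with $\rho$ strictly decreasing. In particular the hypothesis of Theorem \ref{BZanisotropic} on the gradient of $u^\star$ is met, so the equality in P\'olya--Szeg\"o forces $u=u^\star$ a.e.\ up to a translation; hence $\Omega$ is a translate of the Wulff shape $\Omega^\star$ and, after this translation, $u$ is strictly decreasing in $H^\circ$, with super-level sets $\{u>t\}=\mathcal{W}_{R(t)}$ for a continuous strictly monotone function $R(t)$.

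To conclude $m=m^\star$, I would use the layer-cake identity
\[
\int_\Omega m\,u^q\,dx=\int_0^{\esssup u}\! q t^{q-1}\int_{\mathcal{W}_{R(t)}} m\,dx\,dt,
\]
and the analogous identity for $m^\star$ on $\Omega^\star$. Since $\int_{\mathcal{W}_R} m\,dx\le\int_0^{|\mathcal{W}_R|} m^*(s)\,ds=\int_{\mathcal{W}_R} m^\star\,dx$ for every $R$ by Hardy--Littlewood, the equality of the two integrated expressions above forces equality at almost every $t$, and by continuity of $R\mapsto\int_{\mathcal{W}_R}m\,dx$ at every $R>0$. Since the super-level sets of $m^\star$ are precisely the Wulff shapes centered at the origin, the identity $\int_{\mathcal{W}_R} m\,dx=\int_0^{|\mathcal{W}_R|} m^*(s)\,ds$ for every $R$ characterizes $m$ as symmetric decreasing with respect to $H^\circ$, i.e.\ $m=m^\star$ a.e.

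The main technical point will be this last step, extracting the pointwise identity $m=m^\star$ from the integrated equality in Hardy--Littlewood; it hinges crucially on the strict monotonicity of $u^\star$ in $H^\circ$ supplied by Proposition \ref{rad}, without which one would only recover a rearrangement comparison rather than pointwise coincidence of the weights.
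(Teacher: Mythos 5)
Your proposal is correct and follows the same route as the paper: the P\'olya--Szeg\"o principle for the numerator and the Hardy--Littlewood inequality for the denominator, with the equality case reduced to equality in each of these two estimates. The paper disposes of the equality case in one line by citing Theorem \ref{BZanisotropic} and \eqref{HLeq}, whereas you additionally verify the non-degeneracy hypothesis of Theorem \ref{BZanisotropic} (by noting that $u^\star$ is itself a first eigenfunction on $\Omega^\star$ and invoking Proposition \ref{rad} and simplicity) and extract $m=m^\star$ from the Hardy--Littlewood equality via a layer-cake argument over the Wulff-shaped level sets of $u^\star$; both supplements are sound and are in fact needed for a complete proof.
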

\begin{proof}
We observe that $\lambda^H_{p,q}(\Omega^{\star})$ has the following variational characterization
\begin{equation}\label{varcarrad}
    \lambda^H_{p,q}(\Omega^{\star}) = \inf_{\substack{w\in W^{1,p}_0(\Omega^{\star}), \\ w \neq 0}}\frac{\ds\int_{\Omega^{\star}} H(\nabla w)^p\,dx}{\bigg(\ds\int_{\Omega^{\star}} m^{\star}|w|^q\, dx\bigg)^{\frac{p}{q}}}.
\end{equation}
The Faber-Krahn inequality is a straightforward application of the P\'olya-Szeg\"o principle and the Hardy-Littlewood inequality. Indeed if $u$ is a positive eigenfunction corresponding to $\lambda^H_{p,q}(\Omega)$, then
\begin{equation}\label{fabkr}
    \lambda^H_{p,q}(\Omega) = \frac{\ds\int_\Omega H(\nabla u)^p\,dx}{\bigg(\ds\int_\Omega m{u}^q\,dx\bigg)^{\frac{p}{q}}} \ge \frac{\ds\int_{\Omega^{\star}} H(\nabla u^{\star})^p\,dx}{\bigg(\ds\int_{\Omega^{\star}}m^{\star}(u^{\star})^q\,dx\bigg)^{\frac{p}{q}}}\ge \lambda^H_{p,q}(\Omega^{\star}).
\end{equation}
Let us now consider the equality case. From \eqref{fabkr}, P\'olya-Szeg\"o principle and Hardy-Littlewood inequality we get
\begin{equation*}
   1\le\frac{\ds\int_\Omega H(\nabla u)^p\,dx}{\ds\int_{\Omega^{\star}} H(\nabla u^{\star})^p\,dx}= \frac{\bigg(\ds\int_{\Omega^{\star}} m^{\star}(u^{\star})^q\,dx\bigg)^{\frac{p}{q}}}{\bigg(\ds\int_\Omega m u^q\,dx\bigg)^{\frac{p}{q}}}\le 1.
\end{equation*}
It follows that
\begin{equation}\label{PSeq}
    \int_\Omega H(\nabla u)^p\,dx\,dx=\int_{\Omega^{\star}} H(\nabla u^{\star})^p\,dx,
\end{equation}
and
\begin{equation}\label{HLeq}
    \bigg(\ds\int_\Omega m u^q\,dx\bigg)^{\frac{p}{q}}=\bigg(\ds\int_{\Omega^{\star}} m^{\star}(u^{\star})^q\,dx\bigg)^{\frac{p}{q}}.
\end{equation}
The thesis follows from \eqref{PSeq}, Theorem \ref{BZanisotropic} and \eqref{HLeq}.
\end{proof}
\section{A Chiti type inequality}
In this section we prove a reverse H\"older inequality for the eigenfunctions corresponding to $\lambda^H_{p,q}(\Omega)$. We first prove the following proposition as in the spirit of the Talenti result  contained in \cite{talenti1976elliptic} (see also \cite{alberico1999some, trombetti1997convex, alvino1998properties, amato2023talenti, sannipoli2022comparison}).
\begin{prop}
Let $\Omega\subset\mathbb{R}^n$, $n\ge 2$, be an open, bounded and connected set, $1<q\le p$, and let $m\in L^{\infty}(\Omega)$ be a positive function.  Let $u$ be a positive eigenfunction corresponding to $\lambda^H_{p,q}(\Omega)$. 
Then we have that
\begin{equation}\label{diffinequality}
    (-{u^*}'(s))^{p-1} \le n^{-p} k_n^{-\frac{p}{n}}\lambda^H_{p,q}(\Omega) \| u\|_{q,m}^{p-q}\int_0^{s} m^*(u^*)^{q-1}(r)\,dr, \qquad s\in [0,|\Omega|].
\end{equation}
In particular the equality case holds if and only if $\Omega = \Omega^{\star}$ and $m=m^{\star}$ a.e. in $\Omega$,  up to translations, where $m^{\star}$ is the convex symmetrization of $m$.
\end{prop}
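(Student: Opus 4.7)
The plan is to adapt the classical Talenti rearrangement argument, in the weighted anisotropic form used for instance in \cite{alberico1999some}, to the eigenvalue equation \eqref{APQ}.

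First, I would test the weak formulation \eqref{ws} with the truncation $\varphi_h = h^{-1}\min\{(u-t)^+,h\}$ and let $h\to 0^+$. Since $\nabla\varphi_h = h^{-1}\nabla u\,\chi_{\{t<u\le t+h\}}$, Euler's identity $H_\xi(\xi)\cdot\xi=H(\xi)$ from \eqref{prodsc} converts the gradient term into $h^{-1}\int_{\{t<u\le t+h\}}H(\nabla u)^p\,dx$, and dominated convergence on the right-hand side produces, for a.e.\ $t\in(0,\|u\|_{L^\infty})$, the Talenti-type identity
\[
-\frac{d}{dt}\int_{\{u>t\}}H(\nabla u)^p\,dx \;=\; \lambda^H_{p,q}(\Omega)\,\|u\|_{q,m}^{p-q}\int_{\{u>t\}} m\,u^{q-1}\,dx.
\]

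Next, writing $P_H(\{u>t\})=\int_{\{u=t\}}H(\nabla u)/|\nabla u|\,d\mathcal H^{n-1}$, splitting the integrand as $(H(\nabla u)^p/|\nabla u|)^{1/p}(1/|\nabla u|)^{1/p'}$, and applying H\"older's inequality together with the coarea identities \eqref{AFLTAB} and $-\mu'(t)=\int_{\{u=t\}}|\nabla u|^{-1}d\mathcal H^{n-1}$ gives $P_H(\{u>t\})^p \le (-\mu'(t))^{p-1}\bigl(-\tfrac{d}{dt}\int_{\{u>t\}}H(\nabla u)^p\,dx\bigr)$. Substituting the Talenti identity above, applying the anisotropic isoperimetric inequality \eqref{IIA} on the left-hand side, and bounding the remaining integral on the right by Hardy-Littlewood applied to $m$ and $u^{q-1}\chi_{\{u>t\}}$ (whose decreasing rearrangement equals $u^*(s)^{q-1}\chi_{[0,\mu(t)]}(s)$), then passing from $t$ to $s=\mu(t)$ via $(-\mu'(t))(-{u^*}'(s))=1$, yields \eqref{diffinequality}.

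For the equality case, equality in \eqref{diffinequality} forces equality in both \eqref{IIA} and Hardy-Littlewood at a.e.\ level. Isoperimetric equality imposes that each super-level set $\{u>t\}$ be a Wulff shape, necessarily concentric in $t$ by monotonicity, so by Theorem \ref{BZanisotropic} one has $\Omega=\Omega^\star$ and $u=u^\star$ up to translations; plugging $u=u^\star$ into the equality case of Hardy-Littlewood then forces $m=m^\star$ a.e.

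The main technical obstacle is the justification of the limit $h\to 0^+$ and the absolute continuity of $t\mapsto\int_{\{u>t\}}H(\nabla u)^p\,dx$ that make the Talenti identity meaningful pointwise in $t$. This rests on the $C^{1,\alpha}_{\mathrm{loc}}$ regularity of the positive first eigenfunction (available since $u,m\in L^\infty$) together with the set identity $|\{|\nabla u|=0\}\cap\{0<u<\|u\|_{L^\infty}\}|=0$, which is in turn needed to invoke Theorem \ref{BZanisotropic} in the equality analysis.
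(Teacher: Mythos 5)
Your argument follows essentially the same route as the paper's proof: integration of the equation over super-level sets combined with Euler's identity \eqref{prodsc}, H\"older's inequality on the level surface, the anisotropic isoperimetric inequality \eqref{IIA}, the change of variables $s=\mu(t)$, and the Hardy--Littlewood inequality \eqref{fleqf*}. Your truncation-based justification of the level-set identity and your sketch of the equality case are in fact somewhat more detailed than what the paper records, but the substance is identical.
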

\begin{proof}
The function $u$ is a weak solution to the problem 
\begin{equation*} 
\begin{cases}\label{solprob}
-\mathcal{L}_{p}(u)=\lambda^H_{p,q}(\Omega) m\| u\|_{q,m}^{p-q}u^{q-1} & \mbox{in}\ \Omega\vspace{0.2cm}\\
u=0&\mbox{on}\ \partial \Omega\vspace{0.2cm}.
\end{cases}
\end{equation*}
 Let us integrate  both sides of the equation in \eqref{solprob} on the super level sets of $u$. Then integrating by parts we get
\begin{equation*}
    -\int_{\{u=t\}} H(\nabla u)^{p-1}H_{\xi}(\nabla u)\cdot \nu\,d\mathcal{H}^{n-1}=\lambda^H_{p,q}(\Omega)\,\|u\|_{q,m}^{p-q}\int_{\{u>t\}}  mu^{q-1} \,dx.
\end{equation*}
Since on $\{u=t\}$ we have that $\nu = -\nabla u /|\nabla u|$ and using property \eqref{prodsc}, we have that the left hand side of the previous equation becomes
\begin{equation*}
    -\int_{\{u=t\}} H(\nabla u)^{p-1}H_{\xi}(\nabla u)\cdot \nu\,d\mathcal{H}^{n-1} = \int_{\{u=t\}} \frac{H(\nabla u)^{p}}{|\nabla u|}\,d\mathcal{H}^{n-1}.
\end{equation*}
By H\"older inequality and since $H$ is 1-homogeneous, we get
\begin{equation*}
    \int_{\{u=t\}} \frac{H(\nabla u)}{|\nabla u| }\,d\mathcal{H}^{n-1}\le \bigg(\int_{\{u=t\}} \frac{H(\nabla u)^p}{|\nabla u| }\,d\mathcal{H}^{n-1}\bigg)^{\frac{1}{p}}\bigg(\int_{\{u=t\}} \frac{1}{|\nabla u|}\,d\mathcal{H}^{n-1}\bigg)^{1-\frac{1}{p}}
\end{equation*}
Since 
\begin{equation*}
    \int_{\{u=t\}} \frac{H(\nabla u)}{|\nabla u| }\,d\mathcal{H}^{n-1}=-\frac{d}{dt}\int_{\{u>t\}} H(\nabla u)\,d\mathcal{H}^{n-1} = P_H(\{u>t\}),
\end{equation*}
and
\begin{equation*}
    -\mu'(t)= \int_{\{u=t\}} \frac{1}{|\nabla u|}\,d\mathcal{H}^{n-1},
\end{equation*}
applying the isoperimetric inequality \eqref{IIA} we have
\begin{equation*}
    (-\mu'(t))^{1-p}\int_{\{u=t\}} \frac{H(\nabla u)^p}{|\nabla u| }\,d\mathcal{H}^{n-1} \ge n^pk_n^{\frac{p}{n}}\mu(t)^{p-\frac{p}{n}}
\end{equation*}
Since $\mu'(t)=\frac{1}{{u^*}'(\mu(t))}$ we have
\begin{equation*}
    (-{u^*}'(\mu(t)))^{p-1} \le n^{-p}k_n^{-\frac{p}{n}} \lambda^H_{p,q}(\Omega)\| u\|_{q,m}^{p-q}\mu(t)^{\frac{p}{n}-p}\int_{\{u>t\}}  m(x)u^{q-1} \,d\mathcal{H}^{n-1}.
\end{equation*}
Using \eqref{fleqf*} and calling $s=\mu(t)$ we have
\begin{equation*}
    (-{u^*}'(s))^{p-1} \le n^{-p}k_n^{-\frac{p}{n}}\lambda^H_{p,q}(\Omega)\| u\|_{q,m}^{p-q}\,s^{\frac{p}{n}-p}\int_0^{s} m^* (u^*)^{q-1} \,dr.
\end{equation*}
An application of the Hardy-Littlewood inequality gives the desired result.
\end{proof}
%We stress the fact that a quantitative version of the Talenti inequality has been recently proved in \cite{amato2023talenti}.\\ \\
The main tool we use in order to prove Theorem \ref{chitiAPQ} is a suitable comparison result between $u$ and an eigenfunction $z$ of a suitable eigenvalue problem.
More precisely, let $\Omega^{\star}_\lambda$ be the Wulff shape centered at the origin such that $\lambda^H_{p,q}(\Omega)$ is the first eigenvalue to the following symmetric problem 
\begin{equation}
\label{pbrad}
    \begin{cases}
       -\mathcal{L}_{p,q}(z)=\mu m^\star \| z\|_{q,m^\star}^{p-q}z^{q-1} & \mbox{in}\ \Omega^{\star}_\lambda\vspace{0.2cm}\\
z=0&\mbox{on}\ \partial \Omega^{\star}_\lambda\vspace{0.2cm}, \end{cases}
\end{equation}\\
We stress that the Faber-krahn inequality \eqref{FK} implies that 
\begin{equation}\label{OgeW}
    |\Omega|\ge |\Omega^{\star}_\lambda|,
\end{equation}
and hence $m^\star$ is well defined in $\Omega^\star_\lambda$.

Moreover if $z$ is a positive eigenfunction corresponding to $\lambda^H_{p,q}(\Omega)$, we observe that repeating the same argument as before, by Proposition \ref{rad}, for any $1<q\le p$ we have
\begin{equation}
\label{talenti-rad}
(-{z^*}'(s))^{p-1} = n^{-p} k_n^{-\frac{p}{n}}\lambda^H_{p,q}(\Omega) \| z^*\|_{q,m^*}^{p-q}\int_0^{s} m^*(z^*)^{q-1}(r)\,dr.
\end{equation}

The following proposition gives a  comparison result between the eigenfunctions $u$ and $z$ when they are normalized with respect the $L^{\infty}$ norm.
\begin{prop} \label{proppointwisecomp}
Let $\Omega\subset\mathbb{R}^n$, $n\ge 2$, be an open, bounded and connected set, $1<q\le p$ and let $m\in L^{\infty}(\Omega)$ be a positive function.  Let $u$ be a positive solution to the problem \eqref{APQ} corresponding to  $\lambda^H_{p,q}(\Omega)$ and let  $z$ be a positive eigenfunction to the problem \eqref{pbrad} corresponding to $\lambda^H_{p,q}(\Omega)$ such that  
 \[\|u\|_{L^{\infty}(\Omega)}=\|z\|_{L^{\infty}(\Omega^\star_\lambda)}\]
 Then 
\begin{equation*}
    u^*(s)\ge z^*(s), \qquad \forall s \in [0,|\Omega^\star_\lambda|],
\end{equation*}
 where $u^*$ and $z^*$ are respectively the decreasing rearrangements of $u$ and $z$.
 The equality case holds if and only if $\Omega=\Omega^{\star}_\lambda$ and $m=m^{\star}$ a.e. in $\Omega$,  up to translations, where $m^\star$ is the convex symmetrization of $m$.
%where $M=|\Omega|$ and $M_\lambda=|\Omega^\star_\lambda|$.
\end{prop}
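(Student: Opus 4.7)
I plan to argue by contradiction via the Chiti comparison technique (see \cite{Chiti1982,alvino1998properties,alberico1999some,carroll2017rate}), adapted to the present anisotropic weighted setting. First I would record the ODE equality \eqref{talenti-rad} for $z^*$: by Proposition~\ref{rad}, $z$ is Wulff-radial and strictly decreasing in $H^\circ$, so every super-level set $\{z>t\}$ is a Wulff shape. Repeating the derivation of the preceding Talenti-type proposition with $z$ in place of $u$, the anisotropic isoperimetric inequality \eqref{IIA} is saturated at every level (Wulff shapes being its extremizers), and Hardy-Littlewood is saturated because $m^\star$ shares the Wulff symmetry of $z$. All the intermediate inequalities therefore collapse, yielding \eqref{talenti-rad}.

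Assume, for contradiction, that $u^*(\bar s)<z^*(\bar s)$ at some $\bar s \in (0,|\Omega^\star_\lambda|)$. Since $u^*$ and $z^*$ are continuous non-increasing, with $u^*(0)=z^*(0)$ (the $L^\infty$ normalization) and $u^*(|\Omega^\star_\lambda|)\ge 0 = z^*(|\Omega^\star_\lambda|)$ (from \eqref{OgeW} and the Dirichlet condition on $z$), I pick a maximal open sub-interval $(s_1,s_2)\subseteq[0,|\Omega^\star_\lambda|]$ containing $\bar s$ on which $u^*<z^*$, with $u^*(s_1)=z^*(s_1)$ and either $u^*(s_2)=z^*(s_2)$ or $s_2=|\Omega^\star_\lambda|$. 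To exploit the strict inequality on $(s_1,s_2)$, I would construct a Wulff-radial competitor $\Phi(x)=g(k_n(H^\circ(x))^n)$ in $W_0^{1,p}(\Omega^\star_\lambda)$, where $g$ is the non-increasing function obtained by \emph{inserting} $z^*$ in place of $u^*$ on $(s_1,s_2)$: continuity and monotonicity are automatic from the matching at $s_1,s_2$ together with the fact that both $u^*$ and $z^*$ are decreasing, and $\Phi$ is admissible after a possible shift to vanish on $\partial\Omega^\star_\lambda$ when $|\Omega|>|\Omega^\star_\lambda|$. Substituting $\Phi$ into the Rayleigh quotient \eqref{varcarrad}, I would rewrite it via a coarea computation as a one-dimensional integral over $[0,|\Omega^\star_\lambda|]$, and use the Talenti inequality \eqref{diffinequality} outside $(s_1,s_2)$ together with the equality \eqref{talenti-rad} inside. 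The target is to show that the resulting ratio is strictly below $\lambda^{H}_{p,q}(\Omega^\star_\lambda)$, contradicting its variational characterization; the strictness comes from the fact that $g=z^*>u^*$ on the set $(s_1,s_2)$ of positive measure.

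The main obstacle is the careful bookkeeping of the two $L^q$-normalization constants $\|u\|_{q,m}^{p-q}$ and $\|z\|_{q,m^\star}^{p-q}$ which enter the two ODEs: when $p=q$ these cancel out and the argument reduces to the one in \cite{alberico1999some}, while for general $1<q<p$ they must be related through the $L^q$-norm of the modified function $g$ itself (in the spirit of \cite{carroll2017rate}). For the equality case, if $u^*\equiv z^*$ on $[0,|\Omega^\star_\lambda|]$ then every inequality in the derivation of \eqref{diffinequality} must be an equality: equality in the anisotropic isoperimetric inequality \eqref{IIA} forces every super-level set of $u$ to be (a translate of) a Wulff shape, so $u=u^\star$ and $\Omega=\Omega^\star_\lambda$ by Theorem~\ref{BZanisotropic}; equality in Hardy-Littlewood forces $m=m^\star$ a.e., all up to translation.
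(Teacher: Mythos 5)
Your opening step (deriving the equality \eqref{talenti-rad} for $z^*$ from Proposition \ref{rad} by rerunning the Talenti argument with saturated isoperimetric and Hardy--Littlewood inequalities) matches the paper. For the core of the proof, however, you abandon the paper's route and substitute a variational competitor argument, and as described it does not close. The paper's proof is a direct first-crossing comparison of the two one-dimensional profiles: after disposing of the case $|\Omega|=|\Omega^\star_\lambda|$ via Faber--Krahn, one sets $s_0=\inf\{s:\ u^*\ge z^* \text{ on } [s,|\Omega^\star_\lambda|]\}$, which is well defined because $u^*(|\Omega^\star_\lambda|)>0=z^*(|\Omega^\star_\lambda|)$; if $s_0>0$, then $u^*\le z^*$ on $(0,s_0)$ gives $\int_0^s m^*(u^*)^{q-1}\,dr\le\int_0^s m^*(z^*)^{q-1}\,dr$ there, so \eqref{diffinequality} and \eqref{talenti-rad} yield $-{u^*}'\le -{z^*}'$ on $(0,s_0)$; integrating from $0$, where the $L^\infty$ normalization gives $u^*(0)=z^*(0)$, produces $u^*\ge z^*$ on $(0,s_0)$ and contradicts the definition of $s_0$. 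No test function in the Rayleigh quotient is needed for this proposition.

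Your competitor has concrete defects. Since $u^*(|\Omega^\star_\lambda|)>0=z^*(|\Omega^\star_\lambda|)$, any maximal interval $(s_1,s_2)$ on which $u^*<z^*$ has $s_2<|\Omega^\star_\lambda|$, so your $g$ coincides with $u^*$ near $s=|\Omega^\star_\lambda|$ and $\Phi$ does not vanish on $\partial\Omega^\star_\lambda$; subtracting the constant $u^*(|\Omega^\star_\lambda|)$ is not harmless, because the quotient \eqref{varcarrad} is not invariant under vertical shifts and \eqref{diffinequality} is an inequality for $u^*$, not for $u^*-c$. Moreover \eqref{diffinequality} carries the global factor $\|u\|_{q,m}^{p-q}$, computed over all of $\Omega$ including the tail $s\in(|\Omega^\star_\lambda|,|\Omega|)$ where $u^*>0$, so transferring it to $g$ on the truncated interval would require $\|u\|_{q,m}\le\|g\|_{L^q([0,|\Omega^\star_\lambda|],m^*)}$, which does not follow from $g\ge u^*$ there; this is exactly the ``bookkeeping of the normalization constants'' you flag as the main obstacle, but your plan does not resolve it. Finally, the stated target --- a Rayleigh quotient \emph{strictly} below $\lambda^H_{p,q}(\Omega^\star_\lambda)$ --- is not reachable by these estimates: the analogous computation in the paper (for the $L^q$-normalized comparison theorem, where the base profile is $z^*$ and the inserted piece is the part where $u^*>z^*$, so admissibility and all monotonicities go the right way) only yields ``$\le$'', and the contradiction is closed by invoking the simplicity of the first eigenvalue (Theorem \ref{simpl}), a step missing from your outline. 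Your sketch of the equality case is consistent with the paper.
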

\begin{proof}
    First of all we stress that, if $|\Omega|=|\Omega^\star_\lambda|$, then there is nothing to prove, since Faber-Krahn inequality implies that $u^*(s)=z^*(s)$.\\
    Moreover we have $u^*(|\Omega^\star_\lambda|)>z^*(|\Omega^\star_\lambda|)=0$. Then, the following definition is well posed
    \begin{equation*}
        s_0=\inf\{s\in [0,|\Omega^\star_\lambda|]\,:\; u^*(t)\ge z^*(t), \, \forall t \in [s,|\Omega^\star_\lambda|]\}.
    \end{equation*}
    By definition $u^*(s_0)=z^*(s_0)$ and we want to prove that $s_0=0$. We proceed by contradiction supposing that $s_0>0$. Then under this assumption,  $u^*$ and $z^*$ coincide in $0$ and $s_0$ and we have    \begin{equation}\label{ulezugez}
        \begin{cases}
            u^*(s)< z^*(s) & s\in (0,s_0)\\
            u^*(s)\ge z^*(s) & s\in (s_0,|\Omega^\star_\lambda|).
        \end{cases}
    \end{equation}
    By  \eqref{diffinequality}, \eqref{talenti-rad} and \eqref{ulezugez}, we have that
    \begin{equation*}
        -{u^*}'(t)\le -{z^*}'(t), \quad  \text{ for  every } t\in (0,s_0).
    \end{equation*}
    Integrating between $(0,s)$,  with  $s\in (0,s_0)$, being $u^*(0)=z^*(0)$, we get
    \begin{equation*}
        u^*(s)\ge z^*(s), \qquad \forall s \in (0,s_0),
    \end{equation*}
    which is in  contradiction with the definition of $s_0$. Hence $s_0=0$ and the proof is completed.
\end{proof}
As an immediately consequence of the previous result we get the following scale-invariant inequality for any $r>0$

\begin{equation} \label{infinitycomparison}
    \frac{\|u\|_{L^r(\Omega,m)}}{\|u\|_{L^\infty(\Omega)}}\ge \frac{\|z\|_{L^r(\Omega^{\star}_{\lambda},m^\star)}}{\|z\|_{L^\infty(\Omega_{\lambda}^{\star})}}.
\end{equation}

When the functions $u$ and $z$ are  normalized with respect the weighted  $L^q$-norm,  we get the following  comparison result.
\begin{thm}
Let $\Omega\subset\mathbb{R}^n$ be an open, bounded and connected set, $1<q\le p$ and let $m\in L^{\infty}(\Omega)$ be a positive function.  Let $u$ be a positive solution to the problem \eqref{APQ} corresponding to  $\lambda^H_{p,q}(\Omega)$ and let  $z$ be a positive eigenfunction to the problem \eqref{pbrad} corresponding to $\lambda^H_{p,q}(\Omega)$ such that  
 \begin{equation}\label{normq}
        \int_{\Omega} m\,u^{q}\,dx = \int_{\Omega^{\star}_\lambda} m^\star z^{q}\,dx.
    \end{equation}
Then  we have
    \begin{equation}
    \label{norma-r}
        \int_0^{s} m^* \,(u^*)^{r}\,dt\le \int_0^{s} m^*\,(z^*)^{r}\,dt, \quad s\in [0,|\Omega^\star_\lambda|], \quad q\le r 
    \end{equation}
    where $u^*$, $m^*$ and $z^*$ are respectively the decreasing rearrangements of $u$, $m$ and $z$, and $m^\star$ is the convex symmetrization of $m$. 
    The equality case holds if and only if $\Omega=\Omega^{\star}$, $z=u=u^*$ and $m=m^{\star}$ a.e. $\Omega$,  up to translations.
\end{thm}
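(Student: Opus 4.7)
By \eqref{normq} we have $\|u\|_{q,m}=\|z\|_{q,m^\star}$, so the constant $K:=n^{-p}k_n^{-p/n}\lambda^H_{p,q}(\Omega)\|u\|_{q,m}^{p-q}$ appearing in \eqref{diffinequality} for $U:=u^*$ coincides with the one in \eqref{talenti-rad} for $Z:=z^*$. Set $A:=|\Omega^\star_\lambda|$ and extend $Z$ by zero on $(A,|\Omega|]$. My plan is a weighted Chiti-type comparison carried out in three stages: first prove a single-crossing structure for $U$ and $Z$ on $[0,A]$; use it to obtain the $r=q$ case of \eqref{norma-r}; then bootstrap to all $r\ge q$ via Proposition~\ref{domi}.

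The crossing claim is that there exists $s_\star\in(0,A]$ with $U\le Z$ on $[0,s_\star]$ and $U\ge Z$ on $[s_\star,A]$. For existence of a crossing, $U(A)\ge 0=Z(A)$ is immediate, while $U(0)\le Z(0)$ follows from Proposition~\ref{proppointwisecomp} applied to $L^\infty$-renormalized eigenfunctions: that proposition, under equal $L^\infty$-norms, gives $u^*\ge z^*$ on $[0,A]$, so the rescaling required to restore \eqref{normq} must shrink $u$, forcing $\|u\|_\infty\le\|z\|_\infty$. To rule out multiple crossings I would argue by contradiction: assume two consecutive crossings $a<b$ in $[0,A]$ with $U>Z$ on $(a,b)$, then integrate the inequality \eqref{diffinequality} for $U$ and the identity \eqref{talenti-rad} for $Z$ on $(a,b)$, using the matching endpoint values $U(a)=Z(a)$ and $U(b)=Z(b)$ and the monotonicity of $s\mapsto\int_0^s m^*V^{q-1}\,dr$ to reach a contradiction.

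With the crossing in hand, on $[0,s_\star]$ the pointwise inequality $U^r\le Z^r$ gives \eqref{norma-r} directly. On $[s_\star,A]$ the function $s\mapsto\int_0^s m^*(Z^r-U^r)\,dt$ is non-increasing (since $U\ge Z$ there), so it suffices to control it at $s=A$: $\int_0^A m^*U^r\,dt\le\int_0^A m^*Z^r\,dt$. For $r=q$ this tail inequality couples \eqref{normq} with the integrated forms of \eqref{diffinequality}--\eqref{talenti-rad} and with the Hardy-Littlewood inequality applied to $\int_\Omega m u^q\,dx$. For $r>q$, Proposition~\ref{domi} applies on $\Omega^\star$ with $h=m^\star$, strictly convex $F(t)=t^{r/q}$, $g=(u^\star)^q$ and $f=(z^\star)^q$ extended by zero outside $\Omega^\star_\lambda$, the $r=q$ case providing the required domination $hg\prec hf$. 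For the equality case in \eqref{norma-r}, the strict convexity of $F$ in Proposition~\ref{domi} forces $U=Z$ on $[0,A]$ and $U\equiv 0$ on $(A,|\Omega|]$; this in turn pins down equality in \eqref{diffinequality}, in the anisotropic isoperimetric inequality \eqref{IIA}, and in Hardy-Littlewood, whose combined rigidity yields $\Omega=\Omega^\star$ up to translation, $m=m^\star$ a.e.\ in $\Omega$, and $u=u^*=z$.

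The main obstacle is the uniqueness of the crossing, together with the $r=q$ tail inequality at $s=A$: because the right-hand sides of \eqref{diffinequality}--\eqref{talenti-rad} depend nonlocally on $U$ and $Z$ through the history $\int_0^s m^*V^{q-1}\,dr$, a pointwise maximum-principle argument is unavailable, and the contradiction in the two-crossing case requires careful bookkeeping of these integrated quantities against the matching endpoint values and the weighted $L^q$-normalization \eqref{normq}.
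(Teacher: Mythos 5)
Your overall architecture (show $u^*(0)\le z^*(0)$ via Proposition \ref{proppointwisecomp}, establish a single-crossing structure for $U=u^*$ and $Z=z^*$, deduce the case $r=q$, then bootstrap to $r>q$ with Proposition \ref{domi} and $F(t)=t^{r/q}$) coincides with the paper's. The gap is exactly in the step you yourself flag as the main obstacle: uniqueness of the crossing. Your plan is to take two consecutive crossings $a<b$ with $U>Z$ on $(a,b)$ and integrate \eqref{diffinequality} against \eqref{talenti-rad} over $(a,b)$. This does not close. The matching endpoint values only give $\int_a^b(-U')\,ds=\int_a^b(-Z')\,ds$, and to contradict this from the differential relations you would need $\int_0^s m^*U^{q-1}\,dr\le\int_0^s m^*Z^{q-1}\,dr$ (with strictness somewhere) throughout $(a,b)$; but on $(a,b)$ one has $U^{q-1}>Z^{q-1}$, so the running difference $\int_0^s m^*\bigl(U^{q-1}-Z^{q-1}\bigr)\,dr$, which is nonpositive at $s=a$, is increasing on $(a,b)$ and its sign there is indeterminate. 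The ``monotonicity of the history'' thus works against you, and no bookkeeping along these lines produces a contradiction; this is precisely the nonlocality problem you acknowledge but do not resolve.

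The paper closes this step with a different mechanism, which is the ingredient missing from your proposal: it splices a competitor $w^*$ equal to $z^*$ on $[0,s_0]\cup[s_1,|\Omega^\star_\lambda|]$ and to $u^*$ on $[s_0,s_1]$, observes that $w^*$ still satisfies the Talenti-type inequality \eqref{diffinw} (since $\|w\|_{q,m^\star}\ge\|z\|_{q,m^\star}=\|u\|_{q,m}$), multiplies by $-{w^*}'$ and integrates by parts to show that the Rayleigh quotient of the associated anisotropically radial function $w$ on $\Omega^\star_\lambda$ is at most $\lambda^H_{p,q}(\Omega^\star_\lambda)$, and then invokes minimality together with the simplicity of the first eigenvalue (Theorem \ref{simpl}) to force $w^*=z^*$, contradicting $u^*>z^*$ on $(s_0,s_1)$. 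Without this variational-plus-simplicity argument (or a genuine substitute) your proof of \eqref{norma-r} is incomplete. The remaining steps --- the unimodality of $G(s)=\int_0^s m^*\bigl((z^*)^q-(u^*)^q\bigr)\,dt$ after extending $z^*$ by zero, and the passage from $r=q$ to $r>q$ via Proposition \ref{domi} --- do match the paper once the single-crossing property is in hand, and your identification of where the rigidity for the equality case comes from is consistent with the paper's (brief) treatment.
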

\begin{proof}
If $|\Omega|=|\Omega^{\star}_\lambda|$ the conclusion is trivial. Let be $|\Omega|>|\Omega^{\star}_\lambda|$,  since $u$ and $z$ verify \eqref{normq}, by \eqref{infinitycomparison} it holds that 
    \begin{equation*}
        u^*(0)=\|u\|_{L^{\infty}(\Omega)} \le \|z\|_{L^{\infty}(\Omega^{\star}_{\lambda})}= z^*(0),
    \end{equation*}
     If $u^*(0)=z^*(0)$, then Proposition \ref{proppointwisecomp} and the normalization \eqref{normq} imply that $u^*(s)=z^*(s)$ for every $s\in [0,|\Omega^{\star}_\lambda|]$ and than the claim follows trivially.\\
    Let $u^*(0)<z^*(0)$. Since $u^*(|\Omega^{\star}_\lambda|)>z^*(|\Omega^{\star}_\lambda|)$, we can consider
    \begin{equation*}
        s_0 = \sup \{s\in (0,|\Omega^{\star}_\lambda|): u^*(t)\le z^*(t) \text{ for } t \in [0,s]\}.
    \end{equation*}
    Obviously  $0<s_0<|\Omega^{\star}_\lambda|$, $u^*(s_0)=z^*(s_0)$ and $u^*\le z^*$ in $[0,s_0]$.  
    We want to show that $u^*>z^*$ in $[s_0,|\Omega^{\star}_\lambda|]$. Indeed, if we suppose by contradiction that there exists  $s_1>s_0$ such that $u^*(s_1)=z^*(s_1)$ and $u^*(s)>z^*(s)$ for $s\in(s_0,s_1)$ we can construct the following function
    \begin{equation*}
        w^*(s)=
        \begin{cases}
            z^*(s) & s \in [0,s_0]\cup [s_1,|\Omega^{\star}_\lambda|]\\
            u^*(s) & s \in [s_0,s_1].
        \end{cases}
    \end{equation*}
%and define
 %   \begin{equation*}
  %      w(x)=w(k_nH^{\circ}(x)^n), \qquad x \in \Omega^\star_\lambda.
   % \end{equation*}
    It is straightforward to check that %$w\in W^{1,p}_0(\Omega^\star_\lambda)$. By property \eqref{HHcirc} and by the definition of $w$ we have that
    \begin{equation*}
        \int_{\Omega}H(\nabla w)^p\,dx = n^pk^p_n\int_{\Omega} (-{w^*}'(k_n H^{\circ}(x)^n))^pH^{\circ}(x)^{p(n-1)}\,dx.
    \end{equation*}
    Applying Coarea Formula and considering the change of variables $s=k_n t^n$, we get
    \begin{equation*}
        \int_{\Omega} H(\nabla w)^p\,dx = n^p k_n^{\frac{p}{n}}\int_0^{|\Omega^{\star}_\lambda|} s^{p-\frac{p}{n}}(-{w^*}'(t))^p\,dt.
    \end{equation*}
  %  On the other hand we have
   % \begin{equation*}
    %    \int_{\Omega}g^q\,dx = \int_0^{M_\lambda} w^q\,dt.
%    \end{equation*}
Thanks to the normalization \eqref{normq} and the definition of $w$, we have that
\begin{equation*}
    \| u\|_{L^q(\Omega,m)} = \| z\|_{L^q(\Omega^\star_\lambda,m^\star)} \le \| w\|_{L^q(\Omega^\star_\lambda,m^\star)},
\end{equation*}
then  by \eqref{diffinequality} and \eqref{talenti-rad} we have that
    \begin{equation} \label{diffinw}
        (-{w^*}'(s))^{p-1} \le n^{-p} k_n^{-\frac{p}{n}}\lambda^H_{p,q}(\Omega) \| w^*\|_{q,m^*}^{p-q}s^{\frac{p}{n}-p}\int_0^{s} m^*(r)({w^*})^{q-1}(r)\,dr.
    \end{equation}
    Multiplying \eqref{diffinw} by $-w'$, rearranging the terms and integrating between $0$ and $|\Omega^{\star}_\lambda|$, we get
    \begin{equation*}
    \begin{split}
        n^pk_n^{\frac{p}{n}}&\int_0^{|\Omega^{\star}_\lambda|} s^{p-\frac{p}{n}}(-{w^*}'(s))^p\,ds\le \\
        &\le \lambda^H_{p,q}(\Omega) \| w^*\|_{q,m^*}^{p-q} \int_0^{|\Omega^{\star}_\lambda|}(-{w^*}'(s))\int_0^sm^*(r)({w^*})^{q-1}(r)\,dr\,ds.    
    \end{split}
    \end{equation*}
    An integration by parts allows us to conclude that
    \begin{equation*}
        \frac{\ds\int_{\Omega^\star_\lambda} H(\nabla w)^p\,dx}{\bigg(\ds\int_{\Omega^\star_\lambda}m^\star w^q\,dx\bigg)^\frac{p}{q}}=\frac{\ds n^pk_n^{\frac{p}{n}}\int_0^{|\Omega^{\star}_\lambda|} s^{p-\frac{p}{n}}(-w'(s))^p\,ds}{\bigg(\ds \int_0^{|\Omega^{\star}_\lambda|}m^*(s)({w^*})^q(s)\,ds\bigg)^\frac{p}{q}}\le \lambda^H_{p,q}(\Omega)=\lambda^H_{p,q}(\Omega^{\star}_\lambda).
    \end{equation*}
   
    By the minimality and the simplicity of $\lambda^H_{p,q}$, and the definition of $w^*$, it must be $w^*(s)=z^*(s)$ for every $s\in[0,|\Omega^{\star}_\lambda|]$, but this is a contradiction since in $(s_0,s_1)$ we have that $u^*(s)>z^*(s)$.\\
    In this way we have proved that there exists a unique point $s_0$ where $u^*$ and $z^*$ can cross each other, and such that
    \begin{equation}\label{ulezleu}
        \begin{cases}
            u^*(s)\le z^*(s) & s\in [0,s_0]\\
            u^*(s)\ge z^*(s) & s\in [s_0,|\Omega^{\star}_\lambda|].
        \end{cases}
    \end{equation}
    If we extend $z^*$ to be zero in $[|\Omega^{\star}_\lambda|, |\Omega|]$, by \eqref{normq} and \eqref{ulezleu} then we have that for every $s\in [0,|\Omega|]$
    \begin{equation}\label{claim}
        \int_0^{s} m^*(t)({u^*(t)})^{q}\,dt\le \int_0^{s} m^*(t)({z^*(t)})^{q}\,dt. 
    \end{equation}
    Indeed \eqref{ulezleu} implies that the function 
    \[G(s)=\int_0^{s} {m^*}(t)((z^*))^{q}-(u^*)^q\,dt, \qquad s \in [0,|\Omega|]
    \]
    has a maximum in $s_0$ and cannot be negative in any point. This proves \eqref{claim}. Finally inequality \eqref{norma-r} follows easily by \eqref{claim} by using Proposition \ref{domi} being $m^{\star}(u^{\star})^q \prec m^{\star}z^q$. 
    \end{proof}
    %\begin{rem}
    %Then we can apply a generalization of a  result contained in \cite{hardy1929some} to say that for every $q\le r$
    %\begin{equation}\label{comparisonnormr}
     %   \int_0^{|\Omega|} m^*(s)u^*(s)^{r} \,ds \le \int_0^{|\Omega^{\star}_\lambda|} m^*(s)z^*(s)^{r}\,ds.
    %\end{equation}
    %that is the claim.
%\end{rem}
%\begin{cor}
%Let $u$ and $z$ as in Proposition \ref{proppointwisecomp}. For any $r>0$ there exists a positive constant $C=C(n,p,q,r,\lambda^{H}_{p,q})$ such that
%\begin{equation} \label{infinitycomparison}
 %    \|u\|_{L^\infty(\Omega)} \le C \|u\|_{L^r(\Omega,m)}.
%\end{equation}
%\end{cor}
%\begin{proof}
\begin{proof}[Proof of Theorem \ref{chitiAPQ}]
%\lambda_p(\Omega)^{-\frac{n}{p}(r-r)}
The proof of statement i) follows directly from \eqref{normq} and \eqref{norma-r}, indeed we have 
    \begin{equation*}
        \bigg(\int_{\Omega} m u^{r}\,dx\bigg)^{\frac{1}{r}}\le \bigg(\int_{\Omega^{\star}_\lambda} m^\star z^{r}\,dx\bigg)^{\frac{1}{r}} = \frac{\ds \bigg(\int_{\Omega^{\star}_\lambda} m^\star z^{r}\,dx\bigg)^{\frac{1}{r}} }{\ds \bigg(\int_{\Omega^{\star}_\lambda} m^\star z^{q}\,dx\bigg)^{\frac{1}{q}} } \bigg(\int_{\Omega} m u^{q}\,dx\bigg)^{\frac{1}{q}}.
    \end{equation*}
    Therefore we have
    \begin{equation*}
         \|u \|_{L^r(\Omega,m)} \le C\,\,\| u\|_{L^q(\Omega,m)}, \quad q\le r< +\infty;
    \end{equation*}
    with
    \begin{equation*}
        C = \ds\frac{\|z \|_{L^r(\Omega^{\star}_{\lambda},m^{\star})}}{\|z \|_{L^q(\Omega^{\star}_{\lambda},m^{\star})}}.
    \end{equation*}
The proof of statement ii) follows immediately by Proposition \ref{proppointwisecomp} with \\ $C= \frac{\|z\|_{\infty}}{\|z\|_{L^r(\Omega^\star,m^\star)}}$ and the theorem is completely proved.
\end{proof}

\section*{Acknowledgments}
This work has been partially supported by the MiUR PRIN-PNRR 2022 grant "Linear and Nonlinear PDE's: New directions and Applications" and by GNAMPA of INdAM. The second author acknowledges the MIUR Excellence Department Project awarded to the Department of Mathematics, University of Pisa, CUP I57G22000700001.

\section*{Conflicts of interest and data availability statement}
The authors declare that there is no conflict of interest. Data sharing not applicable to this article as no datasets were generated or analyzed during the current study.

\bibliographystyle{plain}
\bibliography{biblio}
\end{document}